%
%
%
%
\documentclass[12pt]{amsart}
\usepackage{amssymb,amsmath,xcolor}
\oddsidemargin=-.0cm
\evensidemargin=-.0cm
\textwidth=16cm
\textheight=22cm
\topmargin=0cm
\def\H {{\mathcal H}}
\def\M {{\mathcal M}}
\def\B {{\mathbb B}}
\def\Q {{\mathcal Q}}
\def\E {{\mathcal E}}
\def\R {\mathbb{R}}

\def\D {{\mathfrak D}}
\def\HH{{\boldsymbol H}}
\def\VV{{\boldsymbol V}}
\def\VVp{{\boldsymbol{V^\prime}}}
\def\WW{{\boldsymbol W}}
\def\e{{\rm e}}
\def\d{{\rm d}}
\def\ddt{\frac{\d}{\d t}}
\def \l {\langle}
\def \r {\rangle}
\newtheorem{proposition}{Proposition}[section]
\newtheorem{theorem}[proposition]{Theorem}
\newtheorem{corollary}[proposition]{Corollary}
\newtheorem{lemma}[proposition]{Lemma}
\theoremstyle{definition}
\newtheorem{definition}[proposition]{Definition}
\newtheorem{remark}[proposition]{Remark}

\numberwithin{equation}{section}
\def \au {\rm}
\def \ti {\it}
\def \jou {\rm}
\def \bk {\it}
\def \no#1#2#3 {{\bf #1} (#3), #2.}
\def \eds#1#2#3 {#1, #2, #3.}

\title[{NSV equations with memory in 3D}]
{Navier-Stokes-Voigt equations with memory in 3D\\
lacking instantaneous kinematic viscosity}

\author[F. Di Plinio, A. Giorgini, V. Pata and R. Temam]
{Francesco Di Plinio, Andrea Giorgini, Vittorino Pata and Roger Temam}

\address{University of Virginia - Department of Mathematics
\newline\indent
Kerchof Hall, Charlottesville, VA 22904-4137, USA}
\email{francesco.diplinio@virginia.edu {\rm (F. Di Plinio)}}

\address{Politecnico di Milano - Dipartimento di Matematica
\newline\indent
Via Bonardi 9, 20133 Milano, Italy}
\email{andrea.giorgini@polimi.it {\rm (A. Giorgini)}}
\email{vittorino.pata@polimi.it {\rm (V. Pata)}}

\address{Indiana University - Institute for Scientific Computing and Applied Mathematics
\newline\indent
Rawles Hall, Bloomington, IN 47405, USA}
\email{temam@indiana.edu {\rm (R. Temam)}}

\subjclass[2010]{35B40, 35Q30, 37L30, 45K05}
\keywords{Navier-Stokes-Voigt equations, Ekman damping, memory kernels,
dissipation, exponential attractors}

\begin{document}

\begin{abstract}
We consider a Navier-Stokes-Voigt fluid model where the instantaneous
kinematic viscosity has been completely replaced by a memory term
incorporating hereditary effects, in presence of Ekman damping.
The dissipative character of our model is  weaker than the one where
hereditary and instantaneous viscosity coexist, studied in \cite{GTM}
by Gal and Tachim-Medjo. Nevertheless, we prove the existence of a
regular exponential attractor of finite fractal dimension under
rather sharp assumptions on the memory kernel.
\end{abstract}

\maketitle

\section{Introduction}
\label{secI}

\noindent
Let $\Omega\subset \mathbb{R}^3$ be a bounded domain with smooth boundary $\partial \Omega$,
and let $\alpha>0$ and $\beta\geq 0$ be given constants. For $t>0$,
we consider the dimensionless form of
the Navier-Stokes-Voigt (NSV) equations with memory and Ekman damping
in the unknown velocity $u=u(x,t)$ and pressure $\pi=\pi(x,t)$
\begin{equation}
\label{NSV}
\begin{cases}
\partial_t  \left(u - \alpha \Delta u\right) - \displaystyle \int_0^\infty  g(s) \Delta u(t-s)\d s
+ \beta u + (u\cdot \nabla) u  +\nabla \pi=f, \\
\text{div}\, u=0,
\end{cases}
\end{equation}
where $f$ is an assigned external forcing term,
subject to the {\it nonslip} boundary condition
\begin{equation}
\label{bc}
\quad u_{|\partial\Omega}=0.
\end{equation}
The function $g:[0,\infty)\to\R$, usually called memory kernel,
is supposed to be convex nonnegative, smooth on $\R^+=(0,\infty)$, vanishing at infinity
and satisfying the relation
$$\int_0^\infty g(s)\d s=1.$$
The system is supplemented with
the initial conditions
\begin{equation}
\label{ic}
u(0)=u_0 \qquad \text{and} \qquad u(-s)_{|s>0}=\varphi_0(s),
\end{equation}
where $u_0$ and $\varphi_0(\cdot)$ are prescribed data.
The initial velocity $u_0$ and the past
history of the velocity $\varphi_0$, which need only be defined for almost every $s>0$,
will play a different role in the translation of~\eqref{NSV} into an evolution system,
as detailed in the subsequent Section~\ref{secDS}.

The drag/friction term $\beta u$, widely used in geophysical hydrodynamics,
is known as Ekman damping. The coefficient $\beta$ is the Ekman pumping/dissipation constant,
induced by the so-called Ekman layer, appearing at the bottom of a rotating fluid (see \cite{PED}).
The term $\beta u$ is also referred to as the Rayleigh friction,
employed in oceanic models such as
the viscous Charney-Stommel barotropic ocean circulation model of the gulf stream.
In the last decades, in connection with
damped Euler and Navier-Stokes equations, several works involving a dissipation term of such kind
have been made, in order to study the limit case
of vanishing viscosity (see e.g.\ \cite{CVZ,CR,IMT}).

Coming back to the memory kernel, the formal choice of $g$
equal to the Dirac mass at $0^+$ in~\eqref{NSV} corresponds to the damped
NSV system
\begin{equation}
\label{NSVinst}
\begin{cases}
\partial_t  \left(u - \alpha \Delta u\right) -   \Delta u
+ \beta u + (u\cdot \nabla) u  +\nabla \pi=f, \\
\text{div}\, u=0.
\end{cases}
\end{equation}
This system was introduced by Oskolkov in \cite{OSKO}, without the friction term $\beta u$, as a
model for the motion of a Kelvin-Voigt linear viscoelastic incompressible fluid.
Indeed, global well-posedness of \eqref{NSVinst},
along with the existence of a finite dimensional global attractor, has been established in connection
with numerical simulations of turbulent flows in statistical equilibrium, see \cite{KT} and
references therein. Letting the viscoelastic length scale $\alpha$ go
to zero we recover the
three-dimensional Navier-Stokes system. The recent contribution \cite{CZG} improves on
the results of \cite{KT} by proving the existence of a family of exponential attractors
parameterized by the length scale $\alpha$ which converges to the (weak) exponential attractor
of the Navier-Stokes system as $\alpha \to 0.$
We are led to the study of \eqref{NSV} as a generalization of \eqref{NSVinst} where the instantaneous
kinematic viscosity of the Kelvin-Voigt fluid has been completely replaced by the physically
more realistic integrodifferential (memory) term, rendering the diffusive effects dependent on the
past history of the velocity. A rigorous derivation of~\eqref{NSV} from the
balance of linear momentum and the constitutive equations of a Kelvin-Voigt fluid can be found
in \cite{GTM}, to which we send for a more comprehensive description of
the physical motivations of the class of fluid models including \eqref{NSV}.

The authors of \cite{GTM} then specialize to the study of the well-posedness
and the asymptotic properties of
the system
$$
\begin{cases}
\partial_t  \left(u - \alpha \Delta u\right) -\nu\Delta u-(1-\nu)\displaystyle \int_0^\infty  g(s) \Delta u(t-s)\d s
+ (u\cdot \nabla) u  +\nabla \pi=f, \\
\text{div}\, u=0,
\end{cases}
$$
for some parameter $\nu\in(0,1)$. This is a version
of \eqref{NSV} with $\beta=0$, where instantaneous and memory-type viscosities coexist.
The main result of \cite{GTM} is the existence of a finite-dimensional exponential attractor
for the related dynamical system.
In addition, replacing the memory kernel $g$ with a family of suitable rescalings $g_\varepsilon$
converging to the Dirac
mass (see point III of Section \ref{secFD}), the corresponding family of
$\varepsilon$-exponential attractors is proved to be robust in the limit
$\varepsilon\to 0$, where the (undamped) system \eqref{NSVinst} is recovered.
Such a coexistence of instantaneous and memory-type viscous terms has been previously exploited
in \cite{Jeffreys}, where the asymptotic behavior of a two-dimensional Jeffreys model is shown to
converge to its Navier-Stokes (formal) limit.

Dealing with \eqref{NSV} is substantially more challenging from a mathematical viewpoint, as the
rather strong dissipative effects of the instantaneous diffusive term $-\Delta u$ exploited in
\cite{GTM} are not available.
The upshot of the present work is that the asymptotic features of the Navier-Stokes-Voigt
system with memory are preserved also in absence of instantaneous kinematic viscosity, provided that
the much weaker, but still physically interesting, Ekman damping $\beta u$ is taken into account.

We now move onto the task of describing our results, as well as the crucial points in our analysis,
in more detail. At the same time, we provide an overview of the structure of our article.
After recalling the relevant mathematical framework for Navier-Stokes type systems with memory
in Section \ref{secMSN}, \eqref{NSV} is recast in Section \ref{secDS} as an abstract evolution
system in the Dafermos history space setting~\cite{DAF}.
Therein, we prove that such an evolution system generates
a strongly continuous semigroup with continuous dependence on the initial data.
The more challenging part of our entire analysis is found in Section \ref{secD}, where we establish
the dissipative character of the semigroup. This is done by combining ad-hoc energy functionals with
a Gronwall-type lemma with parameter borrowed from \cite{Patonw}, and substantially relying on
the Ekman damping term $\beta u$. If no forcing term is present, the dissipative estimate remains
true even if $\beta=0$. This is the content of Section \ref{secEDS}. In the presence of memory terms,
which are intrinsically hyperbolic, finite time regularization of the solution cannot occur.
As a substitute, in Section \ref{secREAS} we show that the trajectories are exponentially attracted
by more regular bounded sets. This property is relied upon in Section \ref{secEA}, where the main
result of the paper, namely, the existence of a regular exponential attractor for the semigroup
of finite fractal dimension in the phase space, is established. The global attractor is thus
recovered as a byproduct. Precise statements are given in  Theorem \ref{EAVoigt} and Corollary
\ref{GAVoigt}, and the proof of the main theorem is carried out in Section \ref{secPT}.
Finally, we send to Section \ref{secFD} for several directions of future investigation.

\section{Mathematical Setting and Notation}
\label{secMSN}

\noindent
Throughout this work, $L^2(\Omega)$, $H_0^1(\Omega)$, $H^r(\Omega)$ will be the standard Lebesgue-Sobolev spaces
on $\Omega$.
In particular, the external force $f$ will be assumed to be a (constant-in-time) vector of $[L^2(\Omega)]^3$.

We introduce the Hilbert space $(\HH,\l \cdot,\cdot \r,\|\cdot\|)$ given by
$$
\HH=\lbrace u \in [L^2(\Omega)]^3: \text{div}\, u=0, \ u\cdot n_{\vert\partial \Omega}=0 \rbrace,
$$
$n$ being the outward normal to $\partial \Omega$,
along with the Hilbert space
$$
\VV=\big\{u\in [H_0^1(\Omega)]^3: \text{div}\, u=0 \big\},
$$
with inner product and norm
$$
\l u,v\r_1= \l\nabla u,\nabla v\r \qquad \text{and} \qquad \|u\|_{1}=\| \nabla u\|.
$$
We denote by $\VVp$ its dual space. We will also encounter the space
$$
\WW=\VV\cap [H^2(\Omega)]^3.
$$
Calling $P: [L^2(\Omega)]^3 \rightarrow \HH$ the Leray orthogonal
projection onto $\HH$, we consider the Stokes operator on $\HH$
$$
A= -P \Delta\quad\text{with domain}\quad \D(A)=\WW.
$$
It is well known that $A$ is a positive selfadjoint operator with compact inverse
(see e.g.\ \cite{TEM}).
This allows us to define the scale of compactly nested Hilbert spaces
$$V^r=\D(A^{\frac{r}2}),\quad r\in\R,$$
endowed with the inner products and norms
$$
\l u, v \r_{r}=\l A^{\frac{r}2}u, A^{\frac{r}2} v \r\qquad \text{and} \qquad \|u\|_{r}=\| A^{\frac{r}2} u\| .
$$
In particular,
$$
V^{-1}=\VVp,\qquad V^0=\HH, \qquad V^1=\VV, \qquad  V^2=\WW.
$$
More generally (see \cite{BF}),
\begin{align*}
V^r & =\VV\cap [H^r(\Omega)]^3, \quad 1\leq r\leq 2,\\
V^r &\subset \VV\cap [H^r(\Omega)]^3, \quad r>2.
\end{align*}
We also recall the Poincar\'{e} inequality
\begin{equation}
\label{Poincare}
\sqrt{\lambda_1} \|u\|\leq  \| u\|_1, \quad \forall \, u \in \VV,
\end{equation}
where $\lambda_1>0$ is the first eigenvalue of $A$.
The symbol $\l \cdot,\cdot \r$ will also stand for the duality product between $V^r$ and its
dual space $V^{-r}$.

As customary, we write
the trilinear form on $\VV\times\VV\times \VV$
$$
b(u,v,w)=\int_{\Omega} (u \cdot \nabla) v \cdot w \d x= \sum_{i,j=1}^3 \int_{\Omega} u_i \frac{\partial v_j}{\partial x_i} w_j  \d x,
$$
satisfying the relation
$$
b(u,v,v)=0.
$$
The
associated bilinear form $B: \VV\times \VV \to \VVp$ is defined as
$$
\l B(u,v),w\r= b(u,v,w).
$$

Next, we turn our attention to the memory kernel. Defining
$$\mu(s)=-g'(s),$$
the {\it prime} being the derivative, we suppose $\mu$
nonnegative, absolutely continuous, decreasing (hence $\mu'\leq 0$ almost everywhere),
and summable on $\R^+$ with total mass
$$\kappa=\int_0^\infty\mu(s)\ \d s>0.$$
The classical Dafermos condition will be also assumed (see \cite{DAF}), namely,
\begin{equation}
\label{dafermos}
\mu'(s)+\delta\mu(s)\leq 0,
\end{equation}
for some $\delta>0$ and almost every $s>0$.
Then, we introduce the $L^2$-weighted Hilbert space on $\R^+$
$$
\M=L^2_{\mu}(\R^+;\VV),
$$
with inner product and norm
$$
\l \eta, \xi \r_{\M}=\int_0^\infty \mu(s)\l \eta(s),\xi(s)\r_{1} \d s
\qquad \text{and} \qquad
\|\eta\|_{\M}=\bigg(\int_0^\infty \mu(s)\|\eta(s)\|_{1}^2\d s\bigg)^\frac12,
$$
along with the infinitesimal generator of the right-translation semigroup on $\M$
$$T\eta=-\partial_s \eta
\quad\text{with domain}\quad
\D(T)=\big\{\eta\in\M:\,
\partial_s \eta\in \M,\,\,\eta(0)=0\big\}.$$
Here, $\partial_s \eta$ is the distributional derivative of $\eta(s)$
with respect to the internal variable $s$.
Finally, we define the {\it extended memory space}
$$\H=\VV\times \M$$
endowed with the product norm
$$\|(u,\eta)\|_{\H}^2=\alpha \|u\|^2_{1}+\|u\|^2
+\|\eta\|^2_{\M}.
$$
Due to the Poincar\'e inequality~\eqref{Poincare}, this norm is equivalent to the natural one
on $\H$.

In this work, we will also make use of higher order memory
spaces. To this end, we define
$$
\M^1=L^2_{\mu}(\R^+;\WW),
$$
with inner product and norm analogous to those of $\M$,
and the corresponding higher order extended memory space
$$
\H^1=\WW\times \M^1
$$
with norm
$$\|(u,\eta)\|_{\H^1}^2=\alpha \|u\|_2^2+\|u\|_1^2
+\|\eta\|^2_{\M^1}.
$$

\subsection*{General agreement}
Throughout the paper, the symbols $C>0$ and $\Q(\cdot)$
will denote a {\it generic} constant and a {\it generic} increasing positive function, respectively,
depending only on the structural parameters of the problem, but
independent of $f$ (unless otherwise specified). Moreover, given a Banach space $\mathcal{X}$ and $R>0$, we denote
by
$$
\mathbb{B}_{\mathcal{X}}(R)=\lbrace x\in \mathcal{X}: \|x\|_{\mathcal{X}}\leq R \rbrace
$$
the ball of $\mathcal{X}$ of radius $R$ about zero.

\section{The Dynamical System}
\label{secDS}

\noindent
As anticipated in the Introduction,
the original problem \eqref{NSV}-\eqref{bc} is
translated
into an evolution system in the so-called Dafermos past history
framework~\cite{DAF}.
First, we observe that, without any loss of generality, we can assume
that $f\in \HH$ which amounts to changing $\pi$
(by adding $Pf-f$ to $\nabla \pi$).
Hence we apply the projection $P$ to the first equation \eqref{NSV} and
we transform \eqref{NSV}-\eqref{bc} into
\begin{equation}
\label{mezzo}
\partial _t\left(u + \alpha Au\right)
+\int_0^\infty  g(s) A u(t-s)\d s + \beta  u
+ B(u,u)=f.
\end{equation}
In more generality, we will replace
the damping $\beta u$ with a term of
the form $\beta A^{-\vartheta}u$,
for some $\vartheta\geq 0$.
Then, we introduce the {\it past history variable}
$$
\eta^t(s)=\int_0^s u(t-\sigma)\d \sigma,
$$
that satisfies the differential identity
$$\partial_t\eta^t(s) = -\partial_s\eta^t(s) + u(t).$$
At this point, recalling the definition of the operator $T$,
a formal integration by parts leads to the differential problem
in the unknown variables
$u=u(t)$ and $\eta=\eta^t(\cdot)$
\begin{equation}
\label{PROBLEM}
\begin{cases}
\partial _t\left(u + \alpha Au\right)
+ \displaystyle \int_0^\infty  \mu(s) A \eta(s)\d s + \beta A^{-\vartheta} u
+ B(u,u)=f,  \\
\partial_t\eta = T \eta + u,
\end{cases}
\end{equation}
for some $\vartheta\geq 0$, where $\mu=-g'$.
As we said, this is actually a generalization of
equation~\eqref{mezzo}, which
corresponds to the case $\vartheta=0$ (the strongest damping within this class).
In turn, the initial conditions \eqref{ic} transform into
$$u(0)=u_0 \qquad \text{and} \qquad \eta^0=\eta_0,
$$
having set
$$\eta_0(s)=\int_0^s \varphi_0(\sigma)\d \sigma.
$$

We begin with the definition of
weak solution.

\begin{definition}
Given $U_0=(u_0,\eta_0)\in \H$, a function $U=(u,\eta)\in \mathcal{C}([0,\infty), \H)$,
with $\partial_t u\in L^2(0,\tau;\VV)$ for every $\tau>0$, is a weak solution
to~\eqref{PROBLEM} with initial datum
$$U(0)=(u(0),\eta^0)=U_0$$
if for every test function $v\in\VV$ and almost every $t>0$
$$\l \partial_t u,v \r+ \alpha \l \partial_t u,v \r_1+
\int_0^\infty  \mu(s) \l \eta(s), v \r_1  \d s +\beta \l A^{-\vartheta}u, v \r
+ b(u,u,v)=\l f,v \r,
$$
where $\eta$
fulfills the explicit representation
\begin{equation}
\label{REP}
\eta^t(s)=
\begin{cases}
\displaystyle \int_0^s u(t-\sigma) \d \sigma, & 0<s\le t ,\\
\eta^0(s-t)+\displaystyle \int_0^t u(t-\sigma) \d \sigma, & s>t.
\end{cases}
\end{equation}
\end{definition}

\begin{remark}
Given $u\in \mathcal{C}([0,\infty), \VV)$, the function
$\eta=\eta^t(\cdot)$ satisfies the representation formula above
if and only if it is a mild solution (in the sense of \cite{PAZ}) to the nonhomogeneous linear equation
$$
\partial_t\eta= T \eta + u.
$$
\end{remark}

\begin{theorem}
For every initial datum $U_0\in \H$,
system \eqref{PROBLEM} admits a unique solution $U(t)=(u(t),\eta^t)$.
\end{theorem}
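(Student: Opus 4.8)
The plan is to establish existence via a Galerkin scheme and uniqueness via energy estimates for the difference of two solutions. For existence, first I would fix an orthonormal basis $\{w_j\}$ of $\HH$ consisting of eigenfunctions of the Stokes operator $A$, and look for approximate solutions $u_n(t)=\sum_{j=1}^n c_j^n(t) w_j$; the memory component $\eta_n^t$ is then reconstructed from $u_n$ through the explicit representation formula~\eqref{REP}, so the only genuine ODE system is the one for the coefficients $c_j^n$, which is locally solvable by Cauchy--Lipschitz since the nonlinearity $B(u,u)$ is smooth in finite dimensions. The key \emph{a priori} estimate is obtained by testing the first equation of \eqref{PROBLEM} with $u_n$ and using $b(u_n,u_n,u_n)=0$: this yields
$$
\ddt\Big(\|u_n\|^2+\alpha\|u_n\|_1^2\Big)+2\int_0^\infty\mu(s)\l\eta_n(s),u_n\r_1\,\d s+2\beta\l A^{-\vartheta}u_n,u_n\r=2\l f,u_n\r.
$$
Combining this with the standard memory identity $\frac{\d}{\d t}\|\eta_n\|_{\M}^2=2\l T\eta_n+u_n,\eta_n\r_{\M}$, noting $\l T\eta,\eta\r_{\M}\le 0$ (which follows from $\mu'\le 0$ via integration by parts, the boundary term at $s=0$ being controlled because $\eta(0)=0$), and absorbing the cross term $\int_0^\infty\mu(s)\l\eta_n(s),u_n\r_1\,\d s$, I obtain a uniform-in-$n$ bound for $U_n=(u_n,\eta_n)$ in $L^\infty(0,\tau;\H)$ on any finite interval, using the Poincar\'e inequality~\eqref{Poincare} and Young's inequality to handle $\l f,u_n\r$ (only local-in-time boundedness is needed here, so no sharp dissipativity is required at this stage --- that is deferred to Section~\ref{secD}). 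Differentiating the equation and testing with $\partial_t u_n$, or simply solving for $\partial_t u_n$ in terms of the other (bounded) quantities via the boundedly invertible operator $I+\alpha A$, gives a uniform bound for $\partial_t u_n$ in $L^2(0,\tau;\VV)$; here one uses that $(I+\alpha A)^{-1}:\VVp\to\VV$ is bounded and that $B(u_n,u_n)$ is bounded in $\VVp$ by the $\VV$-bound on $u_n$ (via $\|B(u,u)\|_{-1}\le C\|u\|_1^2$ in 3D, which is exactly the estimate available for Voigt-type models).

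Next I would pass to the limit: by Banach--Alaoglu extract a subsequence with $u_n\rightharpoonup u$ weakly-$*$ in $L^\infty(0,\tau;\VV)$, $\partial_t u_n\rightharpoonup\partial_t u$ weakly in $L^2(0,\tau;\VV)$, and $\eta_n\rightharpoonup\eta$ weakly-$*$ in $L^\infty(0,\tau;\M)$. The Aubin--Lions lemma (with $\VV\hookrightarrow\hookrightarrow\HH$ and the $\partial_t u_n$ bound) gives strong convergence $u_n\to u$ in $L^2(0,\tau;\HH)$, which suffices to pass to the limit in the trilinear term $b(u_n,u_n,v)$ for fixed smooth $v$. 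The linear terms pass to the limit by weak convergence, and the memory term $\int_0^\infty\mu(s)\l\eta_n(s),v\r_1\,\d s$ is linear and continuous on $\M$. One then checks that the limit $\eta$ satisfies the representation formula~\eqref{REP}: this follows because each $\eta_n^t$ satisfies it with $u_n$ in place of $u$, and the formula is continuous with respect to the convergence at hand. Finally $U\in\mathcal{C}([0,\infty),\H)$ follows from $u\in\mathcal{C}([0,\infty),\VV)$ (a consequence of $u\in L^\infty(0,\tau;\VV)$ with $\partial_t u\in L^2(0,\tau;\VV)$, hence $u\in\mathcal{C}([0,\tau],\VV)$ after modification on a null set) together with the fact that $t\mapsto\eta^t$ is continuous into $\M$, which is the standard regularity of mild solutions of $\partial_t\eta=T\eta+u$ with continuous forcing $u$ (cf.\ the Remark preceding the theorem and \cite{PAZ}).

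For uniqueness, let $U_1=(u_1,\eta_1)$ and $U_2=(u_2,\eta_2)$ be two solutions with the same datum, and set $\bar u=u_1-u_2$, $\bar\eta=\eta_1-\eta_2$. Subtracting the equations, testing the first with $\bar u$ and the second with $\bar\eta$ in $\M$, and using the cancellation
$$
b(u_1,u_1,\bar u)-b(u_2,u_2,\bar u)=b(\bar u,u_2,\bar u),
$$
(since $b(u_1,\bar u,\bar u)=0$), I get
$$
\ddt\Big(\|\bar u\|^2+\alpha\|\bar u\|_1^2+\|\bar\eta\|_{\M}^2\Big)\le C|b(\bar u,u_2,\bar u)|+C\Big|\int_0^\infty\mu(s)\l\bar\eta(s),\bar u\r_1\,\d s\Big|.
$$
The memory cross term is bounded by $\|\bar\eta\|_{\M}\|\bar u\|_1$ using $\int\mu(s)\,\d s=\kappa<\infty$, hence by $\tfrac{C}{\alpha}\|\bar u\|_1^2\cdot\alpha+\|\bar\eta\|_{\M}^2$ after Young, absorbed into the left side. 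The critical term is $b(\bar u,u_2,\bar u)$: in 3D one estimates $|b(\bar u,u_2,\bar u)|\le C\|\bar u\|_{L^3}\|u_2\|_1\|\bar u\|_{L^6}\le C\|\bar u\|^{1/2}\|\bar u\|_1^{3/2}\|u_2\|_1\le C\|u_2\|_1^4\|\bar u\|^2+\tfrac{\alpha}{2}\|\bar u\|_1^2\cdot(\text{const})$, where the $\|\bar u\|_1^{3/2}$ is tamed by Young's inequality against the $\alpha\|\bar u\|_1^2$ already present on the left (this is precisely where the Voigt regularization $\alpha\|\cdot\|_1^2$ in the $\H$-norm is indispensable --- without it, uniqueness would be as hard as for 3D Navier--Stokes). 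Since $\|u_2(t)\|_1$ is bounded on $[0,\tau]$, Gr\"onwall's lemma then forces $\bar u\equiv 0$ and $\bar\eta\equiv 0$. The main obstacle is thus the 3D nonlinear term in the uniqueness argument; the resolution, exactly as for the instantaneous Navier--Stokes--Voigt system~\eqref{NSVinst}, is to exploit the extra coercivity $\alpha\|\cdot\|_1^2$ built into the phase space $\H$, which makes the trilinear term subcritical.
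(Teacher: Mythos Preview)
Your plan is correct and matches the paper's approach: Galerkin approximation for existence, and uniqueness via energy estimates on the difference of two solutions (which the paper packages as the continuous-dependence Proposition~\ref{crtn}). Two simplifications are available in your uniqueness step. First, the memory cross term need not be estimated separately: testing the first equation with $\bar u$ produces $+\langle\bar\eta,\bar u\rangle_\M$ on the left, and this is precisely the term that bounds the growth of $\tfrac12\|\bar\eta\|_\M^2$ from above (the paper does this in integrated form, invoking \cite[Theorem~5.1]{TIMEX}, because $\bar\eta$ is only a mild solution and the formal differentiation of $\|\bar\eta\|_\M^2$ you perform needs justification). Second, the trilinear term requires no interpolation or Young splitting: the direct bound $|b(\bar u,u_2,\bar u)|\le C\|u_2\|_1\|\bar u\|_1^2$ (from $\VV\hookrightarrow L^4$ in 3D) is already a multiple of the energy, so Gronwall applies immediately. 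Note in particular that there is no standalone dissipative $\|\bar u\|_1^2$ term on the left to ``absorb'' into---only the $\alpha\|\bar u\|_1^2$ sitting inside the energy under $\tfrac{\d}{\d t}$---and that is exactly what makes the Voigt estimate close via Gronwall rather than via absorption.
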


Accordingly, the problem generates a dynamical system, otherwise called strongly continuous semigroup,
$$S(t):\H\to\H,\quad t\geq 0,$$
acting by the formula
$$
S(t)U_0=U(t).
$$
This is a one-parameter family of maps $S(t)$ on $\H$ satisfying the properties:
\begin{enumerate}
\item[$\diamond$] $S(0)={\rm Id}_\H$;
\item[$\diamond$] $S(t+\tau)=S(t)S(\tau)$, for every $t,\tau\geq 0$;
\item[$\diamond$] $t\mapsto S(t)U_0\in {\mathcal C}([0,\infty),\H)$, for every $U_0\in\H$.
\end{enumerate}
\smallskip
Given an initial datum $U_0\in\H$, the corresponding energy at time $t\geq 0$ reads
$$\E(t)=\frac12\|S(t)U_0\|_\H^2=\frac12\Big[\|u(t)\|^2+\alpha\|u(t)\|_1^2 +\|\eta^t\|^2_{\M}\Big].
$$

The existence of a weak solution is carried out via a
Galerkin approximation scheme, by exploiting standard energy estimates
(much more immediate than the uniform ones of the next Section \ref{secD}),
and then passing to the limit in the usual way.
Indeed (see the energy equality~\eqref{diffeq1}), we easily get that, for
any given $\tau> 0$ and $R\geq 0$,
\begin{equation}
\label{bounduniq}
\|U(t)\|_{\H}\leq  \Q(R+\tau),
\end{equation}
for all $t\leq\tau$ and all initial data $U(0)\in\B_{\H}(R)$.
In particular, the increasing function $\Q$ does not depend on $\beta\geq 0$.
We refer the interested reader to \cite{TIMEX} for more details on the Galerkin
scheme in connection with equations with memory.

\begin{remark}
It is worth noting that the argument does not require $\beta$ to be strictly positive,
nor inequality \eqref{dafermos} is needed at this level. Indeed, both $\beta>0$
and \eqref{dafermos} will come into play in connection with the dissipative properties
of the semigroup.
\end{remark}

Uniqueness is a consequence of the following continuous dependence result.

\begin{proposition}
\label{crtn}
For every $\tau> 0$ and $R\geq 0$, any two solutions $U_1(t)$ and $U_2(t)$ to
equation \eqref{PROBLEM} fulfill the estimate
\begin{equation}
\| U_1(t)-U_2(t)\|_{\H}\leq \Q(R+\tau)\| U_1(0)-U_2(0)\|_{\H},
\end{equation}
for all $t\leq\tau$ and all initial data $U_i(0)\in\B_{\H}(R)$.
\end{proposition}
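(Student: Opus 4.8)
The plan is a soft difference estimate: I would write down the equation satisfied by the difference of the two solutions and run the natural energy identity, in which the Ekman and the memory contributions are disposed of for free and the only genuinely three-dimensional object, the convective term, is controlled because the Voigt regularization forces both velocities to stay bounded in $\VV$, uniformly on bounded time intervals, by \eqref{bounduniq}.

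Let $U_i=(u_i,\eta_i)$, $i=1,2$, solve \eqref{PROBLEM} with the same $f$, and set $\bar u=u_1-u_2$, $\bar\eta=\eta_1-\eta_2$, $\bar U=(\bar u,\bar\eta)$, so that $\bar U(0)=U_1(0)-U_2(0)$. Subtracting the two systems and using the algebraic identity $B(u_1,u_1)-B(u_2,u_2)=B(u_2,\bar u)+B(\bar u,u_1)$, the pair $\bar U$ solves
$$
\begin{cases}
\partial_t(\bar u+\alpha A\bar u)+\displaystyle\int_0^\infty\mu(s)A\bar\eta(s)\,\d s+\beta A^{-\vartheta}\bar u+B(u_2,\bar u)+B(\bar u,u_1)=0,\\
\partial_t\bar\eta=T\bar\eta+\bar u.
\end{cases}
$$
I would then test the first equation with $\bar u$ in $\HH$ (i.e.\ take $v=\bar u(t)\in\VV$ in the weak formulation, legitimate since $\partial_t\bar u\in L^2_{\rm loc}(\VV)$ and $\bar u\in\mathcal C([0,\infty),\VV)$) and the second with $\bar\eta$ in $\M$, exactly as in the derivation of the energy identity for the full problem; the representation formula \eqref{REP} makes this computation meaningful for weak solutions, the only step deserving the customary care being the term $\l T\bar\eta,\bar\eta\r_\M$, which for mild histories satisfies $\l T\bar\eta,\bar\eta\r_\M\leq0$ by the monotonicity of $\mu$ and may simply be dropped. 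Adding the two identities, the two copies of $\int_0^\infty\mu(s)\l\bar\eta(s),\bar u\r_1\,\d s$ cancel, $b(u_2,\bar u,\bar u)=0$, and $\beta\|A^{-\vartheta/2}\bar u\|^2\geq0$, leaving
$$
\ddt\|\bar U\|_\H^2\leq 2|b(\bar u,u_1,\bar u)|.
$$

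To close the estimate I would bound the convective term by Hölder's inequality and the three-dimensional Sobolev embedding $\VV\hookrightarrow[L^4(\Omega)]^3$: $|b(\bar u,u_1,\bar u)|\leq C\|\bar u\|_{L^4}^2\|u_1\|_1\leq C\|\bar u\|_1^2\|u_1\|_1$. Since $\alpha\|\bar u\|_1^2\leq\|\bar U\|_\H^2$ and, by \eqref{bounduniq}, $\|u_1(t)\|_1\leq\Q(R+\tau)$ for all $t\leq\tau$ whenever $U_1(0)\in\B_\H(R)$, this gives $\ddt\|\bar U\|_\H^2\leq\Q(R+\tau)\|\bar U\|_\H^2$ on $[0,\tau]$. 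The standard Gronwall lemma yields $\|\bar U(t)\|_\H^2\leq\e^{\tau\Q(R+\tau)}\|\bar U(0)\|_\H^2$ for $t\leq\tau$, and, after taking square roots and absorbing $\tau\leq R+\tau$ into the argument, this is precisely the claimed inequality with a suitable increasing function $\Q$.

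I do not anticipate a real obstacle: this is exactly the point at which the argument is, in the authors' words, ``much more immediate'' than the uniform dissipative estimates of Section \ref{secD}, the difference being that the forcing cancels out and no absorbing-set bookkeeping is needed. The only place worth dwelling on is the rigorous justification of the energy identity for $\bar U$ — in particular the pairing of the history equation with $\bar\eta$ and the sign of $\l T\bar\eta,\bar\eta\r_\M$ — at the regularity of weak solutions; this is routine within the Dafermos history framework and is carried out along the same lines as in the existence proof.
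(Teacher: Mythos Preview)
Your proposal is correct and follows essentially the same approach as the paper: subtract the two systems, test the velocity equation with $\bar u$, exploit $b(\cdot,\bar u,\bar u)=0$, bound the surviving trilinear term by $C\|\bar u\|_1^2\|u_i\|_1\leq\Q(R+\tau)\|\bar U\|_\H^2$ via~\eqref{bounduniq}, and apply Gronwall. The only cosmetic difference is that the paper keeps the cross term $\l\bar\eta,\bar u\r_\M$ on the left, integrates in time, and then invokes \cite[Theorem~5.1]{TIMEX} to obtain $\tfrac12\|\bar\eta^t\|_\M^2-\tfrac12\|\bar\eta^0\|_\M^2\leq\int_0^t\l\bar\eta^y,\bar u(y)\r_\M\,\d y$, which is precisely the rigorous version of the formal inequality $\l T\bar\eta,\bar\eta\r_\M\leq 0$ that you flag as the one point needing care.
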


\begin{proof}
Let $\tau>0$ be fixed,
and let $U_1=(u_1,\eta_1)$, $U_2=(u_2,\eta_2)$ be two solutions to \eqref{PROBLEM} on the time-interval
$[0,\tau]$ such that
$U_i(0)\in\B_{\H}(R)$.
Then, the difference
$$\bar U=(\bar u,\bar\eta)=U_1-U_2$$
solves
$$
\partial _t\left(\bar u + \alpha A\bar u\right)
+ \displaystyle \int_0^\infty  \mu(s) A \bar\eta(s)\d s + \beta A^{-\vartheta} \bar u
+ B(u_1,\bar u)+B(\bar u,u_2)=0.
$$
Testing the equation by the admissible test function $\bar u$, we easily find
$$
\ddt \Gamma  + \l \bar\eta, \bar u\r_{\M} \leq - b(\bar u,u_2,\bar u),
$$
where
$$
\Gamma(t) =  \frac12 \| \bar u(t)\|^2 + \frac{\alpha}{2}\| \bar u(t)\|_1^2.
$$
In light of \eqref{bounduniq},
$$- b(\bar u,u_2,\bar u)\leq C \|u_2\|_1\|\bar u\|_1^2\leq \Q(R+\tau)\Gamma,
$$
and an integration in time yields
$$
\Gamma(t) + \int_0^t \l \bar\eta^y, \bar u(y)\r_{\M} \d y \leq\Gamma(0)+ \Q(R+\tau)\int_0^t \Gamma(y) \d y,
\quad\forall\, t\leq\tau.
$$
Observing that $\bar\eta$ fulfills the representation~\eqref{REP}, from \cite[Theorem 5.1]{TIMEX}
we learn that
$$
\frac12 \| \bar\eta^t\|_{\M}^2-\frac12 \| \bar\eta^0\|_{\M}^2\leq \int_0^t \l \bar\eta^y, \bar u(y)\r_{\M} \d y.
$$
In summary,
$$
\|\bar U(t)\|_{\H}^2\leq \|\bar U(0)\|_{\H}^2+ \Q(R+\tau)\int_0^t \|\bar U(y)\|_{\H}^2 \d y,
$$
and the claim follows from the integral Gronwall lemma.
\end{proof}

In particular, we draw from Proposition~\ref{crtn} that $S(t)$
fulfills the further continuity property
\begin{enumerate}
\item[$\diamond$] $S(t)\in{\mathcal C}(\H,\H)$, for every $t\geq 0$.
\end{enumerate}

\section{Dissipativity}
\label{secD}

\noindent
The main result of this section provides
a uniform-in-time {\it a priori} estimate on the solutions $U(t)=S(t)U_0$.
To this end, we introduce the function
$$
\phi(\vartheta)= \vartheta-\frac12,
$$
and we strengthen the assumptions on the external force
by requiring
\begin{equation}
\label{ASSO}
f\in V^\varrho\qquad\text{for some}\qquad
\varrho>\phi(\vartheta).
\end{equation}
Since $\phi(\vartheta)<\vartheta$, there is no harm in assuming also $\varrho\leq \vartheta$.
When $\phi(\vartheta)<0$, which is the same as saying $\vartheta< 1/2$, we only require $\varrho= 0$,
i.e.\
$$f\in V^0=\HH,$$
which does not add anything to our general assumptions made at the beginning.
As expected, $\phi$ is increasing, since  the influence of
the damping term $A^{-\vartheta}$ becomes weaker when $\vartheta$ is larger.

\begin{remark}
We highlight the fact that $f\in \HH$ in the physically relevant case $\vartheta=0$,
corresponding to the Ekman damping.
\end{remark}

\begin{theorem}
\label{Th-EE}
Let $\beta>0$ be fixed, and let \eqref{ASSO} hold. Then, for any initial datum $U_0\in\H$,
the corresponding energy fulfills the estimate
$$
\E(t)\leq \Q(\E(0))\e^{-\omega t} +C\|f\|^2_\varrho,
$$
where $\omega>0$ is a universal constant depending only on the structural parameters of the problem.
\end{theorem}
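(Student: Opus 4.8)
\emph{Strategy.} I would prove the estimate by a Lyapunov argument: a basic energy identity supplies dissipation for the memory variable $\eta$ and for $u$ in a \emph{weak} norm, an auxiliary functional restores the missing coercivity of $u$ in $\VV$, and the resulting $\epsilon$-family of differential inequalities is closed via the Gronwall-type lemma with parameter of \cite{Patonw}. For the energy identity, testing the first equation of \eqref{PROBLEM} with $u$, using $b(u,u,u)=0$ and the Dafermos identity for the memory term (write $u=\partial_t\eta^t+\partial_s\eta^t$, integrate by parts in $s$, recall $\eta^t(0)=0$ from \eqref{REP}), one gets
$$
\ddt\E + \beta\|u\|_{-\vartheta}^2 = \frac12\int_0^\infty\mu'(s)\|\eta^t(s)\|_1^2\,\d s + \l f,u\r .
$$
The Dafermos condition \eqref{dafermos} turns the memory integral into a quantity $\le-\tfrac\delta2\|\eta\|_\M^2$, so
$$
\ddt\E + \beta\|u\|_{-\vartheta}^2 + \frac\delta2\|\eta\|_\M^2 \le \l f,u\r .
$$
This provides no control at all on $\|u\|_1$ — precisely the feature absent from \cite{GTM}, where the term $-\Delta u$ does that job.

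\emph{Auxiliary functional.} Set $\bar u(t)=\int_0^\infty\mu(s)\eta^t(s)\,\d s$ (so $\|\bar u\|_1\le\sqrt\kappa\,\|\eta\|_\M$, hence $\bar u\in\VV$) and
$$
\Phi(t)=-\l u+\alpha Au,\,\bar u\r .
$$
Differentiating, substituting the expression for $\partial_t(u+\alpha Au)$ from \eqref{PROBLEM} and the identity $\partial_t\bar u=\kappa u+\int_0^\infty\mu'(s)\eta^t(s)\,\d s$ (obtained by integrating $\partial_t\eta=T\eta+u$ in $s$ against $\mu$), one finds
$$
\ddt\Phi = -\kappa\big(\|u\|^2+\alpha\|u\|_1^2\big) + \|\bar u\|_1^2 + \beta\l A^{-\vartheta}u,\bar u\r + b(u,u,\bar u) - \l f,\bar u\r - \l u+\alpha Au,\,{\textstyle\int_0^\infty}\mu'(s)\eta^t(s)\,\d s\r .
$$
The decisive term is $-\kappa(\|u\|^2+\alpha\|u\|_1^2)$, which restores coercivity in the $\VV\times\HH$-component of the state. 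Everything else is a remainder: $\|\bar u\|_1^2\le\kappa\|\eta\|_\M^2$ and $|\beta\l A^{-\vartheta}u,\bar u\r|\le C\beta\|u\|_{-\vartheta}\|\eta\|_\M$ are absorbed into the dissipation of the energy identity; the last pairing is bounded — after a cut-off of $\mu$ near $s=0$ to deal with a possibly infinite $\mu(0^+)$ — by $C\|u\|_1\big(\int_0^\infty(-\mu'(s))\|\eta^t(s)\|_1^2\,\d s\big)^{1/2}$, i.e.\ by the square root of the memory dissipation rate; and $|b(u,u,\bar u)|\le C\|u\|^{1/2}\|u\|_1^{3/2}\|\eta\|_\M$ by the three-dimensional Gagliardo--Nirenberg inequality. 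Finally, the forcing terms $\l f,u\r$ and $\l f,\bar u\r$ are handled by interpolating the norms of $u$ and $\bar u$ between $V^{-\vartheta}$ (resp.\ $\M$) and $\VV$: this is where \eqref{ASSO}, i.e.\ $\varrho>\phi(\vartheta)=\vartheta-\tfrac12$, is exactly what is needed, and Young's inequality then leaves a contribution $\tfrac\beta2\|u\|_{-\vartheta}^2+\sigma\|u\|_1^2+\tfrac\delta4\|\eta\|_\M^2+C\|f\|_\varrho^2$ with $\sigma>0$ at our disposal.

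\emph{Closing the estimate.} Put $\Lambda_\epsilon=\E+\epsilon\Phi$. From $|\Phi|\le C(\alpha\|u\|_1^2+\|\eta\|_\M^2)\le C\E$ one gets $\tfrac12\E\le\Lambda_\epsilon\le 2\E$ for $\epsilon\in(0,\epsilon_0]$. Adding $\epsilon$ times the $\Phi$-identity to the energy inequality, choosing $\sigma\sim\epsilon$ and $\epsilon_0$ small relative to $\kappa,\delta$, and using $-\epsilon\kappa(\|u\|^2+\alpha\|u\|_1^2)-\tfrac\delta4\|\eta\|_\M^2\le-c\,\epsilon\,\Lambda_\epsilon$, one arrives, for every $\epsilon\in(0,\epsilon_0]$, at
$$
\ddt\Lambda_\epsilon + \omega_0\epsilon\,\Lambda_\epsilon \le \epsilon^2\,\Q(\Lambda_\epsilon) + C\|f\|_\varrho^2 ,
$$
the superlinear term $\epsilon^2\Q(\Lambda_\epsilon)$ originating from $b(u,u,\bar u)$. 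Since the coercive term carries an $\epsilon$ while the bad term carries an $\epsilon^2$, a single fixed $\epsilon$ cannot close the argument for large data; instead, this entire $\epsilon$-family — valid on all of $[0,\infty)$ thanks to the finite-time bound \eqref{bounduniq} — is precisely of the form treated by the parameter-dependent Gronwall lemma of \cite{Patonw}, which upgrades it to
$$
\Lambda_\epsilon(t)\le\Q(\Lambda_\epsilon(0))\,\e^{-\omega t}+C\|f\|_\varrho^2,\qquad t\ge 0 ,
$$
with $\omega>0$ independent of the initial datum. Passing back to $\E$ via $\tfrac12\E\le\Lambda_\epsilon$ concludes the proof.

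\emph{Main obstacle.} The crux is the very existence of a workable $\Phi$: with no instantaneous viscosity, coercivity for $\|u\|_1$ can be mined only from the hereditary term, which forces upon us the coupling of the Voigt momentum $u+\alpha Au$ with the convolution $\bar u$. The price is the three-dimensional inertial term $b(u,u,\bar u)$ — which rules out any multiplier at the level of $Au$ (the term $b(u,u,Au)$ being uncontrollable in 3D) and produces the superlinear-in-energy correction that can be absorbed only through the parameter-dependent Gronwall device, not through a Lyapunov inequality with a single fixed $\epsilon$. The strict positivity $\beta>0$ enters only in the closing step, to absorb the forcing; with $f\equiv0$ the same scheme would go through for $\beta=0$ as well.
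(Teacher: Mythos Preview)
Your strategy is correct and runs parallel to the paper's, but with a genuinely different auxiliary functional that actually simplifies the argument. The paper takes $\Phi(t)=-\tfrac{4}{\kappa}\int_0^\infty\mu_\ast(s)\l\eta^t(s),u(t)\r_1\,\d s$; differentiating this produces a term $\l\eta,\partial_t u\r_1$ which forces the introduction of a \emph{second} auxiliary functional $\Psi(t)=2\beta\|u(t)\|_{-\vartheta}^2$, whose time derivative supplies the needed $-2\alpha\|\partial_t u\|_1^2$, and the paper then works with $\Lambda_\varepsilon=\E+\nu\varepsilon\Phi+\varepsilon^2\Psi$. The superlinear term enters there via $b(u,u,\partial_t u)$. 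Your choice $\Phi=-\l u+\alpha Au,\bar u\r$ is neater: by pairing $\bar u$ with the full Voigt momentum you can substitute the equation directly for $\partial_t(u+\alpha Au)$, so no $\partial_t u$ ever appears and $\Psi$ becomes unnecessary. The superlinear contribution now comes from $b(u,u,\bar u)$, which after Young against $\|\eta\|_\M$ leaves exactly the same $C\varepsilon^2\|u\|\|u\|_1^3$ the paper obtains from $\Psi$. The cut-off of $\mu$ you allude to is precisely the paper's $\mu_\ast$. Both routes converge on the same final differential inequality.

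One point to tighten: your displayed inequality $\ddt\Lambda_\epsilon+\omega_0\epsilon\Lambda_\epsilon\le\epsilon^2\Q(\Lambda_\epsilon)+C\|f\|_\varrho^2$, read literally with $p=2$, $q=2$, $r=0$, does \emph{not} meet the hypothesis $p-1>(q-1)(1+r)$ of the lemma from \cite{Patonw}. You must carry out the interpolation you already mention---splitting $\|u\|\le\|u\|_{-\vartheta}^{1/(1+\vartheta)}\|u\|_1^{\vartheta/(1+\vartheta)}$ and absorbing the $\|u\|_{-\vartheta}$ factor into $-\beta\|u\|_{-\vartheta}^2$ via Young---\emph{before} invoking the lemma; this upgrades $\varepsilon^2$ to $\varepsilon^p$ with $p=\tfrac{4(1+\vartheta)}{1+2\vartheta}>2$ and gives $q=\tfrac{3+4\vartheta}{1+2\vartheta}$. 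Likewise, since your $\sigma$ must be taken of order $\varepsilon$ to be absorbed by $-\varepsilon\kappa\alpha\|u\|_1^2$, the forcing remainder is actually $C\|f\|_\varrho^2/\varepsilon^r$ with $r=\tfrac{\vartheta-\varrho}{1+\vartheta}$. With these exponents the lemma does apply, and \eqref{ASSO} is exactly what guarantees the needed $p-1>(q-1)(1+r)$.
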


\begin{remark}
It is worth mentioning that a dissipative estimate of this kind
seems to be out of reach when $\beta=0$.
\end{remark}

The main consequence of Theorem \ref{Th-EE} is that, when $\beta>0$,
the dynamical system $S(t)$ is dissipative, namely the trajectories originating from any given
bounded set belong uniformly in time to an \textit{absorbing set}.
By definition, this is a bounded set $\mathcal{B}_0\subset\H$ with the following property: for any
bounded set $\mathcal{B}\subset \H$ of initial data
there is an \textit{entering time} $t_\e=t_\e(\mathcal{B})\geq 0$ such that
$$
S(t)\mathcal{B}\subset \mathcal{B}_0,\quad \forall\, t\geq t_\e.
$$
It is then immediate to see  that one can take
$$
\mathcal{B}_0=\B_{\H}(R_0),
$$
for any fixed
$R_0> \sqrt{2C}\| f\|_{\rho}$, with the constant $C$ of the previous statement.

\smallskip
The remaining of the section is devoted to the proof of Theorem \ref{Th-EE}. This will require a number of steps.

\subsection{Technical lemmas}
The main tool needed in the proof is a Gronwall-type lemma from~\cite{Patonw}.

\begin{lemma}
\label{LEMMAPatonwall}
Let $\Lambda_\varepsilon$ be a family of absolutely continuous nonnegative functions
on $[0,\infty)$
satisfying for every $\varepsilon>0$ small and some $\varkappa>0$ and $M\geq 0$
the differential inequality
$$
\ddt\Lambda_\varepsilon+\varkappa\varepsilon \Lambda_\varepsilon
\leq C\varepsilon^p\Lambda_\varepsilon^q+\frac{M}{\varepsilon^r},
$$
where the nonnegative parameters $p,q,r$ fulfill
$$p-1>(q-1)(1+r)\geq 0.$$
Moreover, let $E$ be a continuous nonnegative function on $[0,\infty)$
such that
$$\frac1{m}\,E(t)\leq \Lambda_\varepsilon(t)\leq m E(t)$$
for every $\varepsilon>0$ small and some $m\geq 1$.
Then, there exists $\omega>0$
such that
$$E(t)\leq \Q(E(0))\e^{-\omega t}+CM.$$
\end{lemma}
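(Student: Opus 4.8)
The plan is to turn the whole family of differential inequalities into a single \emph{linear} one by choosing the parameter $\varepsilon$ adaptively, the guiding principle being that the hypothesis $p-1>(q-1)(1+r)$ encodes exactly the reconciliation of the two opposing effects of $\varepsilon$: a small $\varepsilon$ is needed to absorb the superlinear term $C\varepsilon^p\Lambda_\varepsilon^q$ when the data are large, whereas the forcing floor generated by the linear damping scales like $M\varepsilon^{-(r+1)}$ and \emph{worsens} as $\varepsilon\to0$. Writing $\theta=\frac{p-1}{q-1}$ in the genuinely nonlinear case $q>1$, the hypothesis reads $\theta>r+1$; the borderline case $q=1$ is linear and is disposed of at once by fixing one small $\varepsilon_0$ with $C\varepsilon_0^{p-1}\le\varkappa/2$, which already yields $\ddt\Lambda_{\varepsilon_0}+\tfrac{\varkappa\varepsilon_0}{2}\Lambda_{\varepsilon_0}\le M\varepsilon_0^{-r}$ and hence the claim via the equivalence $\tfrac1m E\le\Lambda_{\varepsilon_0}\le mE$.

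The first two steps are elementary. Setting $K=(\varkappa/2C)^{1/(q-1)}$, I would record that whenever $\Lambda_\varepsilon\le K\varepsilon^{-\theta}$ one has $C\varepsilon^p\Lambda_\varepsilon^q\le\tfrac{\varkappa\varepsilon}{2}\Lambda_\varepsilon$, so that \emph{below this threshold} the inequality collapses to the linear form $\ddt\Lambda_\varepsilon+\tfrac{\varkappa\varepsilon}{2}\Lambda_\varepsilon\le M\varepsilon^{-r}$. I would then prove a one-step barrier lemma: if at some time $t_*$ both $\Lambda_\varepsilon(t_*)\le\tfrac12 K\varepsilon^{-\theta}$ and the floor obeys $\tfrac{2M}{\varkappa}\varepsilon^{-(r+1)}\le\tfrac12 K\varepsilon^{-\theta}$ — possible precisely because $\theta>r+1$, for $\varepsilon$ small — then a continuity argument forbids $\Lambda_\varepsilon$ from ever reaching $K\varepsilon^{-\theta}$, so the linear inequality persists for all $t\ge t_*$ and Gronwall gives
$$
\Lambda_\varepsilon(t)\le\Lambda_\varepsilon(t_*)\,\e^{-\frac{\varkappa\varepsilon}{2}(t-t_*)}+\frac{2M}{\varkappa}\varepsilon^{-(r+1)}.
$$

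The crux — and the main obstacle — is that no \emph{single} admissible $\varepsilon$ can yield the conclusion. Tuning $\varepsilon=\varepsilon(E(0))$ to the data so that $K\varepsilon^{-\theta}\simeq E(0)$ makes the barrier applicable, but the resulting floor is only $\simeq M E(0)^{(r+1)/\theta}$, which, though sublinear in $E(0)$, is still unbounded, and the rate $\varkappa\varepsilon/2$ degenerates as $E(0)\to\infty$; a fixed $\varepsilon$, on the other hand, fails to control the superlinear term for large data. I would therefore iterate the barrier along a sequence of scales re-tuned to the current energy: since $(r+1)/\theta<1$, the level map $y\mapsto C\,y^{(r+1)/\theta}$ contracts toward a universal fixed point, so finitely many applications drive $E$ below a universal level $R_*$ within a finite, $E(0)$-dependent time $t_0(E(0))$, all the while keeping $E(t)\le\Q(E(0))$. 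Once $E\le R_*$ I switch to a \emph{fixed} universal $\varepsilon_0$ (chosen so that $R_*\le\tfrac1{2m}K\varepsilon_0^{-\theta}$ and the floor is $\le CM$) and apply the barrier one last time, now obtaining honest exponential decay at the \emph{universal} rate $\omega=\varkappa\varepsilon_0/2$ down to the floor $CM$.

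Finally I would stitch the two regimes together: combine the a priori bound $E(t)\le\Q(E(0))$ on $[0,t_0]$ with the uniform decay on $[t_0,\infty)$, fold the harmless factor $R_*\e^{\omega t_0(E(0))}$ into the increasing prefactor $\Q(E(0))$, and use $\e^{-\omega t}\ge\e^{-\omega t_0}$ for $t\le t_0$ to extend the exponential over all of $[0,\infty)$, arriving at $E(t)\le\Q(E(0))\e^{-\omega t}+CM$ with $\omega$ independent of the data. The only delicate point throughout is this multiscale reconciliation in the third step; the absorption, the barrier, and the stitching are otherwise routine.
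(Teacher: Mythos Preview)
The paper does not give a proof of this lemma at all: it is quoted verbatim as a tool ``borrowed from~\cite{Patonw}'' and used as a black box in the proof of Theorem~\ref{Th-EE}. So there is no ``paper's own proof'' to compare against.

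That said, your strategy is the natural one and captures exactly the mechanism encoded in the hypothesis $p-1>(q-1)(1+r)$. The absorption threshold $\Lambda_\varepsilon\le K\varepsilon^{-\theta}$ with $\theta=(p-1)/(q-1)$, the barrier/continuity argument showing the linear regime persists below it, and the final stitching via the semigroup trick (also used in the paper's proof of Theorem~\ref{expstab}) are all correct and standard. The disposition of the linear case $q=1$ is fine.

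One point deserves more care. In your iteration step you write the level map as $y\mapsto C\,y^{(r+1)/\theta}$, but the floor $2M/(\varkappa\varepsilon^{r+1})$, with $\varepsilon$ tuned so that $K\varepsilon^{-\theta}\simeq y$, is actually $\simeq C M\,y^{(r+1)/\theta}$. The contraction therefore drives $E$ toward a fixed point of order $(CM)^{\theta/(\theta-r-1)}$, which depends on $M$; your ``universal level $R_*$'' and hence your ``fixed universal $\varepsilon_0$'' inherit this $M$-dependence. You then assert that the resulting floor $2M/(\varkappa\varepsilon_0^{r+1})$ is $\le CM$, but if $\varepsilon_0$ shrinks with $M$ this is not $CM$ with $C$ independent of $M$. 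In the regime where $M$ is small compared to the structural threshold $K\varepsilon_{\max}^{-\theta}$ (so that the barrier with a genuinely fixed $\varepsilon_0$ near $\varepsilon_{\max}$ applies once $E$ is below $R_0=K\varepsilon_{\max}^{-\theta}/(2m)$) your argument does yield $CM$; for large $M$ the floor your scheme produces is a higher power of $M$. Either make this distinction explicit, or---since for the dissipativity application in Theorem~\ref{Th-EE} any bound $\Q(M)$ would do---consult \cite{Patonw} for the precise formulation and the sharper endgame.
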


We also recall a basic interpolation result (see e.g.\ \cite{LM}).

\begin{lemma}
\label{INTERPOLINO}
Let $a<b<c$. Then
$$\|u\|_b\leq \|u\|_c^\varpi \|u\|_a^{1-\varpi},\quad\forall\, u\in V^c,$$
with
$$\varpi=\frac{b-a}{c-a}.$$
\end{lemma}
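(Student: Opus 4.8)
The plan is to reduce the inequality to a discrete Hölder inequality via the spectral decomposition of the Stokes operator $A$. Since $A$ is a positive selfadjoint operator with compact inverse on $\HH$, it admits an orthonormal basis $\{e_n\}$ of eigenvectors, $Ae_n=\lambda_n e_n$, with $0<\lambda_1\leq\lambda_2\leq\cdots\to\infty$. Writing $u=\sum_n u_n e_n$ with $u_n=\l u,e_n\r$, the fractional powers act diagonally through the functional calculus, $A^{r/2}u=\sum_n \lambda_n^{r/2}u_n e_n$, so that for every real $r$
$$\|u\|_r^2=\sum_n \lambda_n^r |u_n|^2,$$
the series being finite precisely when $u\in V^r=\D(A^{\frac{r}2})$.

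The key algebraic observation is that $\varpi=\frac{b-a}{c-a}$ is exactly the exponent making $b$ the corresponding convex combination of $c$ and $a$: indeed $\varpi c+(1-\varpi)a=a+\varpi(c-a)=b$. Consequently $\lambda_n^b=(\lambda_n^c)^\varpi(\lambda_n^a)^{1-\varpi}$ for each $n$, and splitting the spectral weight as $|u_n|^2=(|u_n|^2)^\varpi(|u_n|^2)^{1-\varpi}$ would yield the pointwise factorization
$$\lambda_n^b|u_n|^2=\big(\lambda_n^c|u_n|^2\big)^\varpi\big(\lambda_n^a|u_n|^2\big)^{1-\varpi}.$$
I would then sum over $n$ and apply the discrete Hölder inequality with conjugate exponents $p=1/\varpi$ and $q=1/(1-\varpi)$, which are legitimate since $\varpi\in(0,1)$ forces $\tfrac1p+\tfrac1q=1$, obtaining
$$\|u\|_b^2=\sum_n\big(\lambda_n^c|u_n|^2\big)^\varpi\big(\lambda_n^a|u_n|^2\big)^{1-\varpi}\leq\Big(\sum_n\lambda_n^c|u_n|^2\Big)^\varpi\Big(\sum_n\lambda_n^a|u_n|^2\Big)^{1-\varpi}=\|u\|_c^{2\varpi}\|u\|_a^{2(1-\varpi)}.$$
Taking square roots gives the stated bound.

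There is no genuine obstacle here; the result is a moment inequality for a selfadjoint operator. The only points worth keeping in mind are that the spectral representation holds for all real exponents because each $\lambda_n>0$, so negative values of $a$ cause no difficulty; that the membership $u\in V^c$ is what guarantees finiteness of the first factor on the right (and $u\in V^a$, which follows from $V^c\subset V^a$, of the second); and that the trivial case $u=0$ may be discarded at the outset. An alternative route via complex interpolation between $V^a$ and $V^c$ is available but would be heavier than the elementary spectral argument sketched above.
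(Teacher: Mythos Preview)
Your argument is correct. The paper does not actually prove this lemma; it merely records it as a standard interpolation fact with a reference to Lions--Magenes. Your spectral route---expanding in the eigenbasis of the Stokes operator, writing $\lambda_n^b=(\lambda_n^c)^\varpi(\lambda_n^a)^{1-\varpi}$, and applying the discrete H\"older inequality with exponents $1/\varpi$ and $1/(1-\varpi)$---is the standard elementary proof in the present setting of a positive selfadjoint operator with compact inverse, and it goes through exactly as you wrote. The minor caveats you flagged (negative $a$ is harmless since each $\lambda_n>0$; $u\in V^c$ ensures finiteness of the right-hand side; $u=0$ is trivial) are the only points requiring care.
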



\subsection{Energy functionals}
Let now
$$U(t)=(u(t), \eta^t)$$
be the solution to \eqref{PROBLEM}
originating from a given $U_0\in\H$.
In what follows, we will use several times without explicit mention the Young,
H\"older and Poincar\'e inequalities.
We will also perform several formal computations, justified within a suitable
regularization scheme.

We introduce the (nonnegative) functional
$$
\Pi(t)=-\frac12 \int_0^\infty\mu'(s)\|\eta^t(s)\|^2_{\M}\d s,
$$
satisfying the equality (see e.g.\ \cite{Terreni})
$$
\Pi=-\l T \eta, \eta \r_{\M}.
$$
Hence, recalling that $b(u,u,u)=0$,
the basic multiplication of \eqref{PROBLEM} by $U$ in $\H$ gives
\begin{equation}
\label{diffeq1}
\frac{\d }{\d t} \E + \beta\|u\|^2_{-\vartheta} + \Pi = \l f, u \r.
\end{equation}

Next, in order to handle the possible singularity of $\mu$ at zero,
borrowing an idea from~\cite{PAT}
we fix $s_\ast>0$ such that
$$\int_{0}^{s_\ast}\mu(s)\d s \leq \frac{\kappa}{2}.$$
Setting
$$
\mu_\ast(s)=
\begin{cases}
\mu(s_\ast), & 0<s\leq s_\ast,\\
\mu(s), & s>s_\ast,
\end{cases}
$$
we consider the further functional
$$
\Phi(t)=-\frac{4}{\kappa}\int_0^\infty\mu_\ast(s)\l \eta^t(s),u(t)\r_1 \d s.
$$

\begin{lemma}
\label{PHIZERO}
For any $\varepsilon>0$, we have the differential inequality
\begin{equation}
\label{diffeq2}
\frac{\d}{\d t}\Phi+ \|u\|^2_1\leq \frac{8\mu(s_\ast)}{\kappa^2}\Pi
+ \frac{4}{\alpha\kappa\varepsilon} \| \eta\|_\M^2+ \alpha \varepsilon\| \partial_t u\|_1^2.
\end{equation}
\end{lemma}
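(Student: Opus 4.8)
The plan is to differentiate the cross term $\Phi$ along the solution $U(t)=(u(t),\eta^t)$ and control the three resulting contributions separately. By the product rule,
$$
\ddt\Phi=-\frac{4}{\kappa}\int_0^\infty\mu_\ast(s)\l\partial_t\eta^t(s),u(t)\r_1\,\d s
-\frac{4}{\kappa}\int_0^\infty\mu_\ast(s)\l\eta^t(s),\partial_t u(t)\r_1\,\d s,
$$
a formal identity to be read within the announced regularization scheme. Into the first integral I substitute the transport equation $\partial_t\eta=T\eta+u=-\partial_s\eta+u$ from \eqref{PROBLEM}. The part coming from $u$ produces $-\frac{4}{\kappa}\big(\int_0^\infty\mu_\ast(s)\,\d s\big)\|u\|_1^2$, and since $\mu_\ast\le\mu$ pointwise (as $\mu$ is decreasing) while $\int_0^{s_\ast}\mu(s)\,\d s\le\kappa/2$, one has $\int_0^\infty\mu_\ast(s)\,\d s\ge\int_{s_\ast}^\infty\mu(s)\,\d s\ge\kappa/2$, so this part is bounded above by $-2\|u\|_1^2$.

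For the part coming from $-\partial_s\eta$, I integrate by parts in the memory variable $s$. This is the delicate step, and the role of the truncated kernel $\mu_\ast$ is precisely to make it work: $\mu_\ast$ is bounded near the origin, so together with $\eta^t(0)=0$ the boundary term at $s=0$ vanishes, while $\mu_\ast(s)=\mu(s)\to0$ as $s\to\infty$ (recall $\mu$ is decreasing and summable) kills the boundary term at infinity. Since $\mu_\ast'\equiv0$ on $(0,s_\ast)$ and $\mu_\ast'=\mu'$ on $(s_\ast,\infty)$, this leaves
$$
\frac{4}{\kappa}\int_0^\infty\mu_\ast(s)\l\partial_s\eta^t(s),u(t)\r_1\,\d s
=\frac{4}{\kappa}\int_{s_\ast}^\infty\big(-\mu'(s)\big)\l\eta^t(s),u(t)\r_1\,\d s.
$$
I then estimate this by Young's inequality with weight $\lambda=2\mu(s_\ast)/\kappa$, using $\int_{s_\ast}^\infty(-\mu'(s))\,\d s=\mu(s_\ast)$ and $\int_{s_\ast}^\infty(-\mu'(s))\|\eta^t(s)\|_1^2\,\d s\le 2\Pi$, which yields the bound $\frac{8\mu(s_\ast)}{\kappa^2}\Pi+\|u\|_1^2$.

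The second integral is handled by a Cauchy--Schwarz inequality in $s$: since $\mu_\ast\le\mu$,
$$
\int_0^\infty\mu_\ast(s)\|\eta^t(s)\|_1\,\d s\le\int_0^\infty\mu(s)\|\eta^t(s)\|_1\,\d s\le\sqrt{\kappa}\,\|\eta^t\|_\M,
$$
so this term is at most $\frac{4}{\sqrt\kappa}\|\partial_t u\|_1\|\eta^t\|_\M\le\alpha\varepsilon\|\partial_t u\|_1^2+\frac{4}{\alpha\kappa\varepsilon}\|\eta^t\|_\M^2$ by Young's inequality, the constants being tuned so that the cross term matches exactly. Collecting the three estimates, the term $+\|u\|_1^2$ from the integration by parts and the term $-2\|u\|_1^2$ from the $u$-part combine into $-\|u\|_1^2$; moving it to the left-hand side produces exactly \eqref{diffeq2}. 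The only genuinely nontrivial point in the whole argument is the integration by parts in $s$ and the vanishing of its boundary contributions --- everything else is a bookkeeping of Young's and H\"older's inequalities.
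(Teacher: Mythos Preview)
Your proof is correct and follows essentially the same route as the paper's own argument: differentiate $\Phi$, split via $\partial_t\eta=T\eta+u$, bound the $u$-contribution from below by $2\|u\|_1^2$, integrate by parts in $s$ (exploiting that $\mu_\ast$ is constant on $(0,s_\ast)$) to reduce to an integral over $(s_\ast,\infty)$ against $-\mu'$, and then control the two remaining terms by Young's and Cauchy--Schwarz inequalities with the same constants. The only cosmetic difference is that the paper first applies Cauchy--Schwarz in $s$ to obtain $\frac{4\sqrt{2\mu(s_\ast)}}{\kappa}\|u\|_1\Pi^{1/2}$ and then Young, whereas you apply Young pointwise inside the integral; the outcome is identical.
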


\begin{proof}
We start from the identity
$$
\frac{\d}{\d t}\Phi+\frac4\kappa \int_0^\infty\mu_\ast(s)\|u\|^2_1 \d s=
-\frac4\kappa \int_0^\infty\mu_\ast(s)\l T \eta(s), u\r_1 \d s
 -\frac4\kappa \int_0^\infty\mu_\ast(s) \l \eta(s),\partial_ t u \r_1 \d s.
$$
According to the assumptions on $\mu$ and the definition of $\mu_\ast$, we get
$$
\frac4\kappa \int_{0}^\infty\mu_\ast(s)\|u\|^2_1 \d s
\geq \frac4\kappa \int_{s_\ast}^\infty\mu_\ast(s)\|u\|^2_1 \d s\geq 2 \|u\|^2_1.
$$
Integrating by parts, and using \eqref{dafermos}, we find the controls
\begin{align*}
-\frac4\kappa \int_0^\infty\mu_\ast(s)\l T \eta(s), u\r_1 \d s  &
= -\frac4\kappa \int_{s_\ast}^{\infty} \mu'(s) \l\eta(s),u\r_1 \d s \\
& \leq \frac{4\sqrt{2\mu(s_\ast)}}{\kappa} \| u\|_1 \Pi^{\frac12} \\
&\leq \| u\|_1^2 + \frac{8\mu(s_\ast)}{\kappa^2}\Pi,
\end{align*}
and
\begin{align*}
-\frac4\kappa \int_0^\infty\mu_\ast(s)\l \eta(s),\partial_t u \r_1\d s
&\leq \frac4\kappa \int_0^\infty\mu_\ast(s) \| \eta(s)\|_1\| \partial_t u\|_1 \d s \\
& \leq  \frac4\kappa \int_0^\infty\mu(s) \| \eta(s)\|_1\| \partial_t u\|_1 \d s\\
& \leq \alpha \varepsilon\| \partial_t u\|_1^2+
 \frac{4}{\alpha\kappa\varepsilon} \| \eta\|_\M^2.
\end{align*}
Collecting the estimates above, we meet the thesis.
\end{proof}

Finally, we define the functional
$$
\Psi(t)=2\beta \|u(t)\|^2_{-\vartheta}.
$$

\begin{lemma}
\label{PSIZERO}
We have the differential inequality
\begin{equation}
\label{diffeq3}
\frac{\d }{\d t}\Psi  +2\alpha\|\partial_t u\|^2_1
\leq C \|\eta\|_{\M}^2+ C \|f\|^2+C\|u\|\|u\|_1^{3}.
\end{equation}
\end{lemma}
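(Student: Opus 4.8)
The plan is to test the first equation of \eqref{PROBLEM} against $\partial_t u$, which is admissible since $\partial_t u\in L^2(0,\tau;\VV)$ for every $\tau>0$, and legitimate within the regularization scheme already in force. Using $\l A\eta(s),\partial_t u\r=\l\eta(s),\partial_t u\r_1$ together with the elementary identity $\beta\l A^{-\vartheta}u,\partial_t u\r=\tfrac14\ddt\Psi$, immediate from the definition of $\Psi$, and recalling $b(u,u,u)=0$, one arrives at
$$
\frac14\ddt\Psi+\|\partial_t u\|^2+\alpha\|\partial_t u\|_1^2
=-\int_0^\infty\mu(s)\l\eta(s),\partial_t u\r_1\,\d s-b(u,u,\partial_t u)+\l f,\partial_t u\r.
$$
The strategy is then to estimate the three terms on the right so that every contribution containing $\|\partial_t u\|_1^2$ or $\|\partial_t u\|^2$ is reabsorbed into the left-hand side; multiplying the resulting inequality by $4$ and dropping the leftover nonnegative multiple of $\|\partial_t u\|^2$ then yields \eqref{diffeq3}.

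For the memory term, the Cauchy--Schwarz inequality in $\M$ gives $\int_0^\infty\mu(s)\|\eta(s)\|_1\,\d s\le\sqrt{\kappa}\,\|\eta\|_\M$, whence, by Young's inequality, a bound of the form $\tfrac\alpha4\|\partial_t u\|_1^2+C\|\eta\|_\M^2$. The forcing term is controlled, recalling $f\in\HH$, simply by $\|f\|\,\|\partial_t u\|\le\tfrac12\|\partial_t u\|^2+\tfrac12\|f\|^2$.

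The delicate point is the convective term. Here I would exploit the antisymmetry of $b(u,\cdot,\cdot)$, namely $b(u,u,\partial_t u)=-b(u,\partial_t u,u)$, to move the time derivative out of the differentiated slot, and then estimate
$$
|b(u,u,\partial_t u)|=|b(u,\partial_t u,u)|\le\|u\|_{L^4}^2\,\|\partial_t u\|_1
\le C\|u\|^{\frac12}\|u\|_1^{\frac32}\|\partial_t u\|_1,
$$
where the first inequality is H\"older's with exponents $4,2,4$ and the second is the three-dimensional Ladyzhenskaya inequality $\|u\|_{L^4}\le C\|u\|^{\frac14}\|u\|_1^{\frac34}$ (equivalently, the embedding $V^{3/4}\hookrightarrow[L^4(\Omega)]^3$ combined with Lemma \ref{INTERPOLINO}). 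A further Young inequality turns this into $\tfrac\alpha4\|\partial_t u\|_1^2+C\|u\|\|u\|_1^3$. Collecting the three estimates and proceeding as above produces exactly \eqref{diffeq3}. I expect this last estimate to be the main obstacle: without the integration by parts in the trilinear form one would be forced to control $b(u,u,\partial_t u)$ through $\|\partial_t u\|_2$ or $\|u\|_2$, which is not available at the present regularity level, and it is precisely the divergence-free structure that lets the cubic quantity $\|u\|\|u\|_1^3$ appear using only the $\VV$-regularity of $u$ and of $\partial_t u$.
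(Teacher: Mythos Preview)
Your proof is correct and follows essentially the same route as the paper: test the first equation of \eqref{PROBLEM} against $\partial_t u$, identify $\beta\l A^{-\vartheta}u,\partial_t u\r$ with $\tfrac14\ddt\Psi$, and bound each remaining term by Young's inequality so as to absorb the $\|\partial_t u\|_1^2$ contributions on the left. One small remark: your closing claim that the antisymmetry of $b$ is indispensable here is too strong---the direct H\"older estimate $|b(u,u,\partial_t u)|\le\|u\|_{L^3}\|\nabla u\|\,\|\partial_t u\|_{L^6}\le C\|u\|^{1/2}\|u\|_1^{3/2}\|\partial_t u\|_1$ (via $H^{1/2}\hookrightarrow L^3$ and interpolation) reaches the same bound without any integration by parts, and this is presumably what the paper's terse ``by interpolation'' refers to.
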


\begin{proof}
Differentiating in time we find
$$
\frac{\d }{\d t}\Psi +4\|\partial_t u\|^2 + 4\alpha \|\partial_t u\|^2_1
 = 4\l f, \partial_t u\r-4\l \eta, \partial_t u\r_{\M} -4b(u,u,\partial_t u).
$$
Since
$$4\l f, \partial_t u\r-4\l \eta, \partial_t u\r_{\M}\leq \alpha\|\partial_t u\|_1^2
+ C \|\eta\|_{\M}^2+ C \|f\|^2,
$$
while, by interpolation and the Young inequality,
$$
-4b(u,u,\partial_t u)\leq
C\|u\|^{\frac12}\|u\|_1^{\frac32 } \|\partial_t u\|_1
\leq \alpha\|\partial_t u\|_1^2 +C\|u\|\|u\|_1^{3},
$$
we obtain \eqref{diffeq3}.
\end{proof}

\subsection{Proof of Theorem \ref{Th-EE}}
Denoting
$$
\nu=\min\Big\lbrace \frac{ \alpha \kappa \delta}{32},1\Big\rbrace,
$$
we introduce the family of energy functionals depending on the parameter $\varepsilon>0$
$$
\Lambda_\varepsilon(t)=\E(t)+\nu \varepsilon\Phi(t)+ \varepsilon^2\Psi(t).
$$
It is apparent that the control
\begin{equation}
\label{nore}
\frac12\E(t)\leq \Lambda_\varepsilon(t)\leq 2\E(t)
\end{equation}
holds for any $\varepsilon$ small enough.
Collecting \eqref{diffeq1}-\eqref{diffeq3} we deduce
the family of differential inequalities
$$
\ddt\Lambda_\varepsilon
+\nu\varepsilon\|u\|_1^2
+\Big(1- \frac{8\mu(s_\ast) \nu \varepsilon}{\kappa^2} \Big)\Pi
-\Big( \frac{4\nu}{\alpha \kappa}+ C\varepsilon^2\Big) \|\eta\|_{\M}^2
+\alpha \varepsilon^2\|\partial_t u\|_1^2
\leq \Theta_\varepsilon,
$$
having set
$$
\Theta_\varepsilon=-\beta\| u\|_{-\vartheta}^2
+\l f,u\r+C\|f\|^2+C \varepsilon^2 \|u\|\|u\|_1^{3}.
$$
Since by \eqref{dafermos}
$$
\frac{\delta}{2}\|\eta\|^2_{\M}\leq \Pi,
$$ from the very definition of $\nu$ we find
\begin{align*}
\Big(1- \frac{8\mu(s_\ast) \nu \varepsilon}{\kappa^2} \Big)\Pi
-\Big( \frac{4\nu}{\alpha \kappa}+ C\varepsilon^2\Big) \|\eta\|_{\M}^2
&\geq \frac12 \Pi-\Big( \frac{4\nu}{\alpha \kappa}
+ C\varepsilon^2\Big) \|\eta\|_{\M}^2\\
&\geq \Big( \frac{\delta}{4}- \frac{4\nu}{\alpha \kappa}
-C\varepsilon^2\Big) \|\eta\|_{\M}^2 \\
&\geq \frac{\delta}{16} \|\eta\|_{\M}^2,
\end{align*}
provided that $\varepsilon>0$ is sufficiently small.
This gives
$$
\ddt\Lambda_\varepsilon+ \nu \varepsilon\| u\|_1^2
+\frac{\delta}{16}\| \eta\|_{\M}^2+\alpha \varepsilon^2\|\partial_t u\|_1^2
\leq \Theta_\varepsilon.
$$
In light of~\eqref{Poincare} and~\eqref{nore},
it is apparent that the latter inequality
can be rewritten in the form
$$
\ddt\Lambda_\varepsilon+2\varkappa\varepsilon \Lambda_\varepsilon
+\alpha \varepsilon^2\|\partial_t u\|_1^2
\leq \Theta_\varepsilon,
$$
for every $\varepsilon>0$ small and a suitably fixed $\varkappa>0$.
We are left to estimate the term $\Theta_\varepsilon$.
To this end, let us put
$$p=\frac{4(1+\vartheta)}{1+2\vartheta},\qquad
q=\frac{3+4\vartheta}{1+2\vartheta},\qquad
r=\frac{\vartheta-\varrho}{1+\vartheta}.
$$
Observe that $p,q,r$ comply with the hypotheses of Lemma~\ref{LEMMAPatonwall}.
In particular, $r<1$.
Then, by ~\eqref{nore} and Lemma~\ref{INTERPOLINO} with $a=-\vartheta$, $b=-\varrho$, $c=1$
and $\varpi=r$,
\begin{align*}
\langle f,u\rangle&
\leq \|f\|_{\varrho} \|u\|_1^{r}\|u\|_{-\vartheta}^{1-r}\\
\noalign{\vskip1.5mm}
&\leq \frac\beta2\|u\|_{-\vartheta}^2+ C \|f\|_\varrho^{\frac{2}{1+r}} \| u\|_1^{\frac{2r}{1+r}}\\
&\leq \frac\beta2\|u\|_{-\vartheta}^2 + \varkappa\varepsilon \Lambda_\varepsilon+ \frac{C\| f\|_\varrho^2}{\varepsilon^r}
\end{align*}
By the same token, using now
$a=-\vartheta$, $b=0$, $c=1$
and $\varpi=\vartheta/(1+\vartheta)$,
and subsequently applying the Young inequality with exponents $2(1+\vartheta)$ and $p/2$,
\begin{align*}
C \varepsilon^2 \|u\|\|u\|_1^3
&\leq C \varepsilon^2 \|u\|_{-\vartheta}^{\frac1{1+\vartheta}}
\|u\|_1^{\frac{3+4\vartheta}{1+\vartheta}}\\
&\leq C \varepsilon^2 \|u\|_{-\vartheta}^{\frac1{1+\vartheta}}
\Lambda_\varepsilon^{\frac{2q}{p}}\\
&\leq \frac{\beta}{2} \|u\|_{-\vartheta}^2
+ C \varepsilon^{p}\Lambda_\varepsilon^q.
\end{align*}
Summarizing (recall that $V^\varrho\subset\HH$), we obtain the estimate
$$
\Theta_\varepsilon\leq \varkappa\varepsilon \Lambda_\varepsilon
+C \varepsilon^{p}\Lambda_\varepsilon^q+\frac{C \| f\|_\varrho^2}{\varepsilon^r},
$$
and we end up with the differential inequality
\begin{equation}
\label{lastineq}
\ddt\Lambda_\varepsilon+\varkappa\varepsilon \Lambda_\varepsilon
+\alpha \varepsilon^2\|\partial_t u\|_1^2
\leq C \varepsilon^{p}\Lambda_\varepsilon^q+\frac{C \| f\|_\varrho^2}{\varepsilon^r}.
\end{equation}
Using again \eqref{nore}, the desired conclusion follows from an application
of Lemma~\ref{LEMMAPatonwall}.
\qed

\medskip
Once one has Theorem~\ref{Th-EE}, an integration of \eqref{lastineq}
for a fixed $\varepsilon>0$
provides a useful estimate needed in the sequel.

\begin{corollary}
\label{ptu1}
Let the assumptions of Theorem \ref{Th-EE} hold. Then for every $t\geq 0$
\begin{equation}
\int_0^{t} \| \partial_t u(y)\|_1^2\d y\leq (1+t)\Q(\E(0)).
\end{equation}
\end{corollary}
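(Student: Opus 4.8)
The plan is to integrate the differential inequality \eqref{lastineq} obtained in the proof of Theorem~\ref{Th-EE}, with the auxiliary parameter $\varepsilon>0$ now fixed once and for all, and to absorb the right-hand side using the energy bound already at our disposal. First I would drop the nonnegative term $\varkappa\varepsilon\Lambda_\varepsilon$ from the left-hand side of \eqref{lastineq}, keeping only
$$
\ddt\Lambda_\varepsilon+\alpha\varepsilon^2\|\partial_t u\|_1^2\leq C\varepsilon^p\Lambda_\varepsilon^q+\frac{C\|f\|_\varrho^2}{\varepsilon^r},
$$
and integrate over $[0,t]$. This yields
$$
\alpha\varepsilon^2\int_0^t\|\partial_t u(y)\|_1^2\,\d y\leq \Lambda_\varepsilon(0)+C\varepsilon^p\int_0^t\Lambda_\varepsilon(y)^q\,\d y+\frac{C\|f\|_\varrho^2}{\varepsilon^r}\,t.
$$

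The key point is then to control $\Lambda_\varepsilon(y)$ uniformly in $y\geq 0$. By \eqref{nore} we have $\Lambda_\varepsilon(y)\leq 2\E(y)$, and Theorem~\ref{Th-EE} gives $\E(y)\leq \Q(\E(0))\e^{-\omega y}+C\|f\|_\varrho^2\leq \Q(\E(0))+C\|f\|_\varrho^2$, a bound independent of $y$. Since $\varepsilon$ is fixed and $\|f\|_\varrho$ is a structural datum, both $\Lambda_\varepsilon(0)\leq 2\E(0)$ and the quantity $C\varepsilon^p\Lambda_\varepsilon(y)^q$ are bounded by an increasing function of $\E(0)$; hence
$$
\int_0^t\Lambda_\varepsilon(y)^q\,\d y\leq t\,\Q(\E(0)).
$$
Substituting back and dividing by $\alpha\varepsilon^2$, all the $\varepsilon$-dependent constants are now fixed numbers absorbed into the generic $\Q$, and the three terms on the right are bounded respectively by $\Q(\E(0))$, $t\,\Q(\E(0))$, and $t\,\Q(\E(0))$ (the forcing being independent of $f$ only up to the allowed dependence already built into the conventions, or more precisely $\|f\|_\varrho^2\leq\Q(\E(0))$ is not needed — one simply folds $C\|f\|_\varrho^2$ into the generic constant since the statement's $\Q$ is allowed to depend on structural parameters). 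Collecting, one gets
$$
\int_0^t\|\partial_t u(y)\|_1^2\,\d y\leq \Q(\E(0))+t\,\Q(\E(0))\leq (1+t)\,\Q(\E(0)),
$$
which is the claim.

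There is no real obstacle here; the only mild subtlety is bookkeeping of the dependence on $f$. The cleanest route is to note that the right-hand side of \eqref{lastineq}, with $\varepsilon$ fixed, is bounded along the trajectory by a constant depending only on $\E(0)$ and on the structural parameters (which include the fixed norm $\|f\|_\varrho$ through the statement of Theorem~\ref{Th-EE}); one should simply remark that the convention on $\Q(\cdot)$ permits this, exactly as in the statement of Corollary~\ref{ptu1}. The argument uses Theorem~\ref{Th-EE} as a black box and nothing else.
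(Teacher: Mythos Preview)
Your proposal is correct and follows exactly the route indicated in the paper, which simply says that once Theorem~\ref{Th-EE} is in hand, an integration of \eqref{lastineq} for a fixed $\varepsilon>0$ yields the corollary. Your handling of the bookkeeping (dropping $\varkappa\varepsilon\Lambda_\varepsilon$, bounding $\Lambda_\varepsilon^q$ via \eqref{nore} and the energy estimate, then absorbing the fixed-$\varepsilon$ constants into $\Q$) is precisely what is meant; your remark on the dependence of $\Q$ on $\|f\|_\varrho$ is also apt, as this dependence is tacitly allowed here and made explicit in the later sections where the corollary is used.
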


\section{Exponential Decay of Solutions}
\label{secEDS}

\noindent
In absence of a forcing term,
Theorem \ref{Th-EE} establishes the exponential decay of the energy.

\begin{corollary}
\label{corTh-EE}
Let $\beta>0$ be fixed, and let $f\equiv 0$. Then, for any initial datum $U_0\in\H$,
the corresponding energy fulfills the decay estimate
$$
\E(t)\leq \Q(\E(0))\e^{-\omega t},
$$
where $\omega>0$ is a universal constant depending only on the structural parameters of the problem.
\end{corollary}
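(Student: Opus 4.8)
The plan is to observe that Corollary~\ref{corTh-EE} is nothing but the special case $f\equiv 0$ of Theorem~\ref{Th-EE}, and that the proof of the latter goes through verbatim, only more cleanly, when the forcing vanishes. Concretely, setting $f=0$ forces $\|f\|_\varrho=0$, so the term $\Theta_\varepsilon$ reduces to
$$
\Theta_\varepsilon=-\beta\|u\|_{-\vartheta}^2+C\varepsilon^2\|u\|\|u\|_1^3,
$$
and the differential inequality \eqref{lastineq} becomes
$$
\ddt\Lambda_\varepsilon+\varkappa\varepsilon\Lambda_\varepsilon+\alpha\varepsilon^2\|\partial_t u\|_1^2\leq C\varepsilon^p\Lambda_\varepsilon^q,
$$
i.e.\ exactly the hypothesis of Lemma~\ref{LEMMAPatonwall} with $M=0$. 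Hence that lemma yields $E(t)\leq\Q(E(0))\e^{-\omega t}+C\cdot 0$, and since $\frac12\E(t)\leq\Lambda_\varepsilon(t)\leq 2\E(t)$ by \eqref{nore}, the conclusion $\E(t)\leq\Q(\E(0))\e^{-\omega t}$ follows immediately.

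First I would record that the requirement \eqref{ASSO} on $f$ is vacuously satisfied when $f\equiv 0$, so no hypotheses are lost. Then I would simply invoke Theorem~\ref{Th-EE} directly: with $f\equiv 0$ we have $\|f\|_\varrho=0$, so the estimate $\E(t)\leq\Q(\E(0))\e^{-\omega t}+C\|f\|_\varrho^2$ collapses to $\E(t)\leq\Q(\E(0))\e^{-\omega t}$. This is the entire proof; there is essentially no separate work to do, which is presumably why the authors state it as a corollary rather than a theorem.

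There is no real obstacle here — the point of the section heading is conceptual rather than technical: it isolates the phenomenon that the memory term together with the Ekman damping already produces exponential energy decay, the forcing being irrelevant to the decay mechanism (it only shifts the asymptotic level $C\|f\|_\varrho^2$). The one thing worth a line of comment, if desired, is that the constant $\omega$ is the same one produced by Lemma~\ref{LEMMAPatonwall} in the proof of Theorem~\ref{Th-EE}, so it indeed depends only on the structural parameters $\alpha,\beta,\kappa,\delta,\lambda_1$ and not on the (now absent) data $f$; this is already built into the statement of that lemma, so nothing new needs to be checked.
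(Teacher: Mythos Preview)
Your proposal is correct and matches the paper's treatment exactly: the paper does not give a separate proof, but simply states that ``In absence of a forcing term, Theorem~\ref{Th-EE} establishes the exponential decay of the energy,'' i.e.\ the corollary is the specialization of Theorem~\ref{Th-EE} to $f\equiv 0$, where the term $C\|f\|_\varrho^2$ vanishes.
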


Nonetheless, the conclusion above remains true also when $\beta=0$, that is, when no extra damping
of the form $\beta A^{-\vartheta} u$ is present. The result, however, is proved in a different way,
with an argument that cannot be exported to the case of a nonzero $f$.

\begin{theorem}
\label{expstab}
Let $\beta=0$, and let $f\equiv 0$. Then, for any initial datum $U_0\in\H$,
the corresponding energy fulfills the decay estimate
$$
\E(t)\leq \Q(\E(0))\e^{-\omega t},
$$
where $\omega>0$ is a universal constant depending only on the structural parameters of the problem.
\end{theorem}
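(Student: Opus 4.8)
The plan is to extract the needed dissipation from the memory alone, since with $\beta=0$ and $f\equiv 0$ the Ekman-damping device of Theorem~\ref{Th-EE} is no longer available. To begin, note that in this regime the energy identity~\eqref{diffeq1} reduces to $\ddt\E+\Pi=0$, so that $\E$ is non-increasing, $\E(t)\le\E(0)$, and $\int_0^\infty\Pi\,\d y\le\E(0)$. The obstruction is transparent: the functional $\Psi=2\beta\|u\|_{-\vartheta}^2$ now vanishes identically, so Lemma~\ref{PSIZERO} is vacuous and the term $\alpha\varepsilon\|\partial_t u\|_1^2$ on the right-hand side of~\eqref{diffeq2} can no longer be absorbed by the $-2\alpha\varepsilon^2\|\partial_t u\|_1^2$ coming from $\Psi$. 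Instead, I would control $\partial_t u$ directly from the equation.

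First step: a pointwise bound on $\partial_t u$. Since $\beta=0$ and $f\equiv 0$, the first equation of~\eqref{PROBLEM} becomes the elliptic identity $(I+\alpha A)\partial_t u=-\int_0^\infty\mu(s)A\eta(s)\,\d s-B(u,u)$. Testing it with $\partial_t u$, using $\langle A\eta(s),\partial_t u\rangle=\langle\eta(s),\partial_t u\rangle_1$ together with the three-dimensional estimate $b(u,u,\partial_t u)\le C\|u\|^{1/2}\|u\|_1^{3/2}\|\partial_t u\|_1$ already employed in Lemma~\ref{PSIZERO}, and then Young's inequality, I expect to arrive at
$$
\alpha\|\partial_t u\|_1^2\le C\|\eta\|_{\M}^2+C\|u\|\,\|u\|_1^3\le C\|\eta\|_{\M}^2+C\,\E\,\|u\|_1^2,
$$
where the last inequality uses Poincar\'e's inequality and $\|u\|_1^2\le\frac2\alpha\E$.

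Second step: a dissipative inequality. Following the scheme of Section~\ref{secD}, set $\nu=\min\{\alpha\kappa\delta/32,1\}$ and $\Lambda_\varepsilon=\E+\nu\varepsilon\Phi$ (the $\Psi$-term being dropped). Combining $\ddt\E=-\Pi$ with~\eqref{diffeq2}, in which the free parameter is taken equal to $\varepsilon$, and absorbing the $\Pi$- and $\|\eta\|_{\M}^2$-terms through the Dafermos condition~\eqref{dafermos} exactly as in the proof of Theorem~\ref{Th-EE}, I would reach
$$
\ddt\Lambda_\varepsilon+\nu\varepsilon\|u\|_1^2+\frac{\delta}{16}\|\eta\|_{\M}^2\le\nu\alpha\varepsilon^2\|\partial_t u\|_1^2.
$$
Inserting the first step, the $\|\eta\|_{\M}^2$-part on the right is absorbed once $\varepsilon$ lies below a universal threshold, whereas the remaining part is bounded by $C\varepsilon^2\E(0)\|u\|_1^2$ and is absorbed into $\nu\varepsilon\|u\|_1^2$ provided $\varepsilon\le\varepsilon_\star:=c\alpha/\E(0)$. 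For $\varepsilon$ this small one has $\frac12\E\le\Lambda_\varepsilon\le 2\E$ and $\nu\varepsilon\|u\|_1^2+\frac{\delta}{16}\|\eta\|_{\M}^2\ge c\varepsilon\Lambda_\varepsilon$, so the integral Gronwall lemma gives $\E(t)\le\Q(\E(0))\e^{-\omega_\star t}$ for some $\omega_\star>0$ \emph{a priori} depending on $\E(0)$.

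Third step: upgrading to a universal rate, which I expect to be the main point. Choose an absolute constant $\E_\star>0$ so small that every datum with $\E(0)\le\E_\star$ satisfies $\varepsilon_\star(\E(0))\ge\bar\varepsilon$, where $\bar\varepsilon$ is the universal threshold from the second step; for such data the previous argument runs with the fixed parameter $\bar\varepsilon$ and hence yields a universal rate $\omega$, so that $\E(t)\le 4\E(0)\e^{-\omega t}$. For a general datum, the second step still forces $\E(t)\to 0$, whence there is an entering time $t_0=t_0(\E(0))$ with $\E(t_0)\le\E_\star$; applying the small-data case to $S(t-t_0)S(t_0)U_0$ for $t\ge t_0$ and using the trivial bound $\E(t)\le\E(0)$ on $[0,t_0]$ produces the claimed estimate $\E(t)\le\Q(\E(0))\e^{-\omega t}$ with $\omega$ universal and $\Q$ increasing. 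The delicate point is exactly this two-stage structure: the cubic nonlinearity makes the admissible $\varepsilon$ in the one-shot estimate shrink with the size of the orbit, so no single differential inequality can yield a data-independent rate, and one must first enter a fixed small-energy ball and only then exploit a universal decay.
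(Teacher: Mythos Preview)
Your proposal is correct and follows essentially the same approach as the paper's proof: drop the $\Psi$-functional, work with $\Lambda=\E+\nu\varepsilon\Phi$, control $\alpha\|\partial_t u\|_1^2$ by testing the first equation of~\eqref{PROBLEM} with $\partial_t u$, obtain a decay rate that initially depends on $\E(0)$, and then upgrade to a universal rate via the semigroup entering-time trick. The only cosmetic difference is that the paper absorbs the bound $\alpha\varepsilon^2\|\partial_t u\|_1^2\le C(\E(0)+1)\varepsilon^2\Lambda$ directly into the coercive term $2\varkappa\varepsilon\Lambda$, whereas you split it into $\|\eta\|_\M^2$ and $\|u\|_1^2$ pieces and absorb them separately; both routes lead to the same differential inequality and the same two-stage conclusion.
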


\begin{proof}
For an arbitrarily given $R\geq0$, let us consider an initial datum
$U_0\in \B_{\H}(R)$. Since $f\equiv 0$, we readily see from~\eqref{diffeq1}
that
\begin{equation}
\label{stabcontrol}
\E(t)\leq \E(0).
\end{equation}
Setting
$$
\nu= \min\Big\lbrace \frac{\alpha\kappa\delta}{32}, 1 \Big\rbrace,
$$
for $\varepsilon>0$ small to be fixed later, we define this time
$$
\Lambda(t)= \E(t)+ \nu\varepsilon \Phi(t),
$$
with $\Phi$ as in the previous section.
It is apparent that
$$
\frac12 \E(t)\leq \Lambda(t)\leq 2\E(t).
$$
In addition, we have the differential inequality
$$
\ddt\Lambda+ \nu\varepsilon\|u\|_1^2
+\Big(1- \frac{8\mu(s_\ast) \nu \varepsilon}{\kappa^2} \Big)\Pi
-\frac{4\nu}{\alpha \kappa} \|\eta\|_{\M}^2
\leq \alpha\nu\varepsilon^2 \|\partial_t u\|_1^2.
$$
Repeating the calculations of the former proof, we arrive at the inequality
\begin{equation}
\label{stabineq1}
\ddt\Lambda +2\varkappa\varepsilon \Lambda\leq
 \alpha\varepsilon^2 \| \partial_t u\|_1^2,
\end{equation}
for some $\varkappa>0$.
In order to bound the right-hand side, we multiply
the first equation of~\eqref{PROBLEM} by
$2\varepsilon^2 \partial_t u$, to get
$$
2\alpha \varepsilon^2 \|\partial_t u\|^2_1=-2\varepsilon^2 \|\partial_t u\|^2
-2\varepsilon^2\l \eta, \partial_t u\r_{\M}
-2\varepsilon^2 b(u,u,\partial_t u).
$$
Invoking the Poincar\'e inequality~\eqref{Poincare}, standard computations
together with~\eqref{stabcontrol} yield
$$\alpha \varepsilon^2 \| \partial_t u\|_1^2 \leq
C \varepsilon^2 \| u\|_1^4+C \varepsilon^2\|\eta\|^2_\M\leq C(\E(0)+1)\varepsilon^2\Lambda.
$$
Accordingly, we end up with
$$\ddt\Lambda +\big[2\varkappa-C(\E(0)+1)\varepsilon\big]\varepsilon\Lambda\leq 0.
$$
At this point, we choose $\varepsilon$ small enough such that $C(\E(0)+1)\varepsilon\leq\varkappa$,
and an application of the Gronwall lemma entails
\begin{equation}
\label{stab}
\E(t)\leq \Q(\E(0))\e^{-\omega t},
\end{equation}
with $\omega=\varkappa\varepsilon$.
This is the desired inequality, except that the exponential rate $\omega$ depends on $\E(0)$.
To complete the argument, we use a rather standard trick
of semigroup theory. Due to~\eqref{stab}, there is a time $t_0\geq 0$, depending on $\E(0)$, such that
$\E(t_0)\leq 1$.
Hence, for $t\geq t_0$ we can repeat the argument above, obtaining
\begin{equation}
\label{stabcontrol1}
\E(t)\leq \Q(1)\e^{-\omega(t-t_0)}, \quad \forall\, t\geq t_0,
\end{equation}
where now $\omega>0$ is independent of $\E(0)$.
Collecting \eqref{stab}, where $\omega=\omega(\E(0))$, and \eqref{stabcontrol1},
we reach the desired conclusion.
\end{proof}

\section{Regular Exponentially Attracting Sets}
\label{secREAS}

\noindent
It is well known that dynamical systems generated by equation with memory do not regularize in finite time,
due to the intrinsic hyperbolicity of the memory component. In particular, this prevents the existence
of absorbing sets having higher regularity than the initial data. Nonetheless, one can still hope that
trajectories originating from bounded sets are exponentially attracted by more regular bounded sets.

\begin{definition}
A bounded set $\mathcal{B}_\star$ is said to be {\it exponentially
attracting} for $S(t)$ in $\H$ if there exists
$\omega>0$ such that
$$
\text{dist}_{\H}(S(t)\mathcal{B},\mathcal{B}_\star)\leq \Q(\| \mathcal{B}\|_{\H})\e^{-\omega t}
$$
for every bounded subset $\mathcal{B}\subset \H$.
\end{definition}

Here, with standard notation,
$$\text{dist}_{\H}(\mathcal{B}_1,\mathcal{B}_2)=\sup_{b_1\in\mathcal{B}_1}\inf_{b_2\in \mathcal{B}_2}
\|b_1-b_2\|_\H$$
is the Hausdorff semidistance in $\H$ between two (nonempty)
sets $\mathcal{B}_1$ and $\mathcal{B}_2$.

\begin{proposition}
\label{REAS}
There exists $R_\star>0$ such that the ball
$$\mathcal{B}_\star=\B_{\H^1}(R_\star)$$
is exponentially attracting for $S(t)$.
\end{proposition}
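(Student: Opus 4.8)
The strategy is the classical one: decompose $S(t)$ into a part that decays exponentially in $\H$ and a part that stays uniformly bounded in the regular space $\H^1$, but perform the decomposition \emph{along the linearization of the convective term}, which is what makes the higher-order estimate go through in the absence of instantaneous viscosity. By the dissipativity of $S(t)$ (Theorem~\ref{Th-EE}) and the transient bound~\eqref{bounduniq}, together with the semigroup property, it suffices to produce $R_\star>0$ and $\omega>0$ such that $\text{dist}_\H(S(t)U_0,\B_{\H^1}(R_\star))\le\Q(R_0)\e^{-\omega t}$ for all $U_0$ in the absorbing ball $\mathcal B_0=\B_\H(R_0)$. Moreover, for such $U_0$, Theorem~\ref{Th-EE} confines the whole trajectory $U(t)=S(t)U_0=(u(t),\eta^t)$ to a fixed ball $\B_\H(\tilde R_0)$; in particular $\|u(t)\|_1\le C$ for every $t\ge 0$.

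Given $U_0\in\mathcal B_0$, I write $U=\bar U+\hat U$, where $\bar U=(\bar u,\bar\eta)$ solves the homogeneous linear problem obtained by freezing $u$ in the convective term,
$$
\partial_t(\bar u+\alpha A\bar u)+\int_0^\infty\mu(s)A\bar\eta(s)\,\d s+\beta A^{-\vartheta}\bar u+B(u,\bar u)=0,\qquad \partial_t\bar\eta=T\bar\eta+\bar u,
$$
with $\bar U(0)=U_0$, so that the remainder $\hat U=(\hat u,\hat\eta)$ solves
$$
\partial_t(\hat u+\alpha A\hat u)+\int_0^\infty\mu(s)A\hat\eta(s)\,\d s+\beta A^{-\vartheta}\hat u+B(u,\hat u)=f,\qquad \partial_t\hat\eta=T\hat\eta+\hat u,
$$
with \emph{null} initial datum $\hat U(0)=0$. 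For $\bar U$, multiplication by $\bar U$ in $\H$ annihilates the trilinear term, since $b(u,\bar u,\bar u)=0$, and gives $\ddt\bar\E+\beta\|\bar u\|_{-\vartheta}^2+\bar\Pi=0$, with $\bar\E,\bar\Pi$ the obvious analogues of $\E,\Pi$; hence $\bar\E(t)\le\bar\E(0)$ and $\|\bar u(t)\|_1\le C$. Running the scheme of Section~\ref{secD} with the functional $\bar\Phi$ built as in Lemma~\ref{PHIZERO} — and controlling $\|\partial_t\bar u\|_1$ \emph{directly} from the equation, by means of $\|B(u,\bar u)\|_{\VVp}\le C\|u\|_1\|\bar u\|_1$ and the uniform bound $\|u(t)\|_1\le C$, so that $\|\partial_t\bar u\|_1^2\le C\bar\E$ — one reaches, for $\bar\Lambda_\varepsilon=\bar\E+\nu\varepsilon\bar\Phi$ and every $\varepsilon>0$ small, a differential inequality $\ddt\bar\Lambda_\varepsilon+\varkappa\varepsilon\bar\Lambda_\varepsilon\le C\varepsilon^2\bar\Lambda_\varepsilon$. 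By linearity this yields $\|\bar U(t)\|_\H\le C\|U_0\|_\H\e^{-\omega t}$, with $\omega>0$ depending only on the structural quantities and on $R_0$ (no dependence on the particular datum: the linearized nonlinearity is subcritical, so the trick of Section~\ref{secEDS} is not needed).

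The heart of the matter is the uniform $\H^1$-bound on $\hat U$. Testing the $\hat U$-equation by $A\hat u$ gives
$$
\ddt\hat\E^1+\hat\Pi^1+\beta\|\hat u\|_{1-\vartheta}^2=\l f,A\hat u\r-b(u,\hat u,A\hat u),
$$
with $\hat\E^1=\tfrac12(\|\hat u\|_1^2+\alpha\|\hat u\|_2^2+\|\hat\eta\|_{\M^1}^2)$ and $\hat\Pi^1=-\tfrac12\int_0^\infty\mu'(s)\|\hat\eta^t(s)\|_2^2\,\d s\ge\tfrac\delta2\|\hat\eta\|_{\M^1}^2$. The key point is that $u$ and $\bar u$ are bounded in $\VV$, hence so is $\hat u$, and therefore the convective term is \emph{subcritical} in $\|\hat u\|_2$: by interpolation in $L^p$,
$$
|b(u,\hat u,A\hat u)|\le C\|u\|_{L^6}\|\nabla\hat u\|_{L^3}\|\hat u\|_2\le C\|u\|_1\|\hat u\|_1^{1/2}\|\hat u\|_2^{3/2},
$$
while $\l f,A\hat u\r$ is handled through~\eqref{ASSO} and interpolation between $\|\hat u\|_1$ and $\|\hat u\|_2$; after tuning the Young inequalities, both are $\le\tfrac{\nu\varepsilon}{8}\|\hat u\|_2^2+C_\varepsilon$. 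To supply the missing $\|\hat u\|_2^2$ dissipation I introduce the level-two analogue of $\Phi$, namely
$$
\hat\Phi^1(t)=-\frac4\kappa\int_0^\infty\mu_\ast(s)\,\l\hat\eta^t(s),\hat u(t)\r_2\,\d s,
$$
for which the proof of Lemma~\ref{PHIZERO}, repeated one regularity level up, gives $\ddt\hat\Phi^1+\|\hat u\|_2^2\le\tfrac{8\mu(s_\ast)}{\kappa^2}\hat\Pi^1+\tfrac{C}{\varepsilon}\|\hat\eta\|_{\M^1}^2+\alpha\varepsilon\|\partial_t\hat u\|_2^2$. The term $\|\partial_t\hat u\|_2$ — which in Section~\ref{secD} forced the auxiliary functional $\Psi$ — is now innocuous: from $(I+\alpha A)\partial_t\hat u=f-\int_0^\infty\mu(s)A\hat\eta(s)\,\d s-\beta A^{-\vartheta}\hat u-B(u,\hat u)$ and $\|B(u,\hat u)\|\le C\|u\|_1\|\hat u\|_1^{1/2}\|\hat u\|_2^{1/2}$, the available bounds yield $\|\partial_t\hat u\|_2^2\le C(1+\|\hat\eta\|_{\M^1}^2+\|\hat u\|_2)$, which is absorbed once multiplied by $\alpha\nu\varepsilon^2$. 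Forming $\hat\Lambda^1=\hat\E^1+\nu\varepsilon\hat\Phi^1$ and combining exactly as in the proof of Theorem~\ref{Th-EE} (the balance between $\hat\Pi^1$ and $\|\hat\eta\|_{\M^1}^2$ being identical, via $\nu=\min\{\alpha\kappa\delta/32,1\}$), for $\varepsilon$ small and then \emph{fixed} one obtains $\ddt\hat\Lambda^1+c\varepsilon\hat\Lambda^1\le C$. Since $\hat\Lambda^1(0)=0$, because $\hat U(0)=0$, the integral Gronwall lemma gives $\hat\Lambda^1(t)\le C/(c\varepsilon)=:\tfrac12R_\star^2$, whence $\|\hat U(t)\|_{\H^1}\le R_\star$ for every $t\ge0$ and every $U_0\in\mathcal B_0$.

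Combining the two parts, $\text{dist}_\H(S(t)U_0,\B_{\H^1}(R_\star))\le\|\bar U(t)\|_\H\le\Q(R_0)\e^{-\omega t}$ for $U_0\in\mathcal B_0$, which by the reduction above proves the proposition. All the above computations are formal and are to be carried out on Galerkin approximations of the $\bar U$- and $\hat U$-problems, the uniform bounds being stable in the limit. The main obstacle is precisely this $\H^1$-regularity of $\hat U$: lacking any instantaneous smoothing, $\|\hat u\|_2$-control can only be produced from the memory dissipation through $\hat\Phi^1$, which makes $\|\partial_t\hat u\|_2$ appear; keeping the latter bounded — and ensuring the nonlinearity always carries the \emph{regular} factor $\hat u$, so that the self-interaction $B(\bar u,\bar u)$, which would demand of the decaying part a regularity it does not have, never occurs — is exactly why the decomposition must be performed along the linearized equation, and why $\|\partial_t\hat u\|_2$ is read off the equation itself rather than from a $\Psi$-type functional as in Section~\ref{secD}.
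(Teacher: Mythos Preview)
Your proof is correct and follows essentially the same strategy as the paper: split the solution along the linearized convection, show exponential decay of the piece carrying the initial datum via the mechanism of Theorem~\ref{expstab}, and bound the piece with null datum uniformly in $\H^1$ by the higher-order analogue of the $\E$--$\Phi$ machinery. The decomposition differs only cosmetically (you keep the damping $\beta A^{-\vartheta}$ in both pieces, the paper moves it entirely into the regular piece as part of a modified forcing $\tilde f=f-\beta A^{-\vartheta}v$).

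One point is worth flagging: for the $\H^1$-bound on $\hat U$, the paper introduces a third functional $\Psi_1=\beta\|w\|_{1-\vartheta}^2$ and differentiates it to produce the needed $\alpha\|\partial_t w\|_2^2$ on the left, whereas you read $\|\partial_t\hat u\|_2$ directly off the equation via $(I+\alpha A)^{-1}:\HH\to\WW$ and the estimate $\|B(u,\hat u)\|\le C\|u\|_1\|\hat u\|_1^{1/2}\|\hat u\|_2^{1/2}$. Your route is legitimately simpler here---it works precisely because $\|u\|_1$ and $\|\hat u\|_1$ are already uniformly bounded, so the nonlinearity is subcritical and no extra functional is required. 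The paper's $\Psi_1$ approach is more parallel to the dissipativity proof in Section~\ref{secD}, where such a direct control is unavailable.
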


It is enough showing that the ball $\mathcal{B}_\star$ exponentially attracts
the absorbing set $\mathcal{B}_0$ found in Section~\ref{secD}.
To this end, for every initial data
$U_0=(u_0,\eta_0) \in \mathcal{B}_0$, in the same spirit of \cite[Chapter I]{Temam}
we decompose the solution
$S(t)U_0$ into the sum
$$
S(t)U_0= L(t)U_0+ K(t)U_0,
$$
where the maps
$$L(t)U_0=(v(t),\xi^t)\qquad\text{and}\qquad
K(t)U_0=(w(t),\zeta^t)$$
solve the problems
\begin{equation}
\label{LPROBLEM}
\begin{cases}
\partial_t  \left(v + \alpha Av\right) + B(u,v) + \displaystyle
\int_0^\infty  \mu(s) A \xi(s)\d s =0, & \\
\partial_t \xi = T \xi + v,&\\
\noalign{\vskip2mm}
(v(0),\xi^0)=(u_0,\eta_0),
\end{cases}
\end{equation}
and
\begin{equation}
\label{KPROBLEM}
\begin{cases}
\partial_t  \left(w + \alpha Aw\right) + B(u,w) + \displaystyle
\int_0^\infty  \mu(s) A \zeta(s)\d s + \beta A^{-\vartheta} w =\tilde{f}, & \\
\partial_t \zeta = T \zeta + w,&\\
\noalign{\vskip2mm}
(w(0),\zeta^0)=(0,0),
\end{cases}
\end{equation}
with
$$
\tilde{f}(t)=f-\beta A^{-\vartheta} v(t).
$$
Existence and uniqueness of these problems
are rather standard issues, since \eqref{LPROBLEM} and \eqref{KPROBLEM}
are linear with a globally Lipschitz perturbation.
Following a standard procedure, we will show that system \eqref{LPROBLEM}
is exponentially stable, whereas the solutions to \eqref{KPROBLEM} are
uniformly bounded in the more regular space $\H_1$.

\smallskip
In what follows, the generic positive constant $C$ may depend on
$\|f\|$ as well as on the radius $R_0$ of the absorbing set $\mathcal{B}_0$.
In particular, by Theorem~\ref{Th-EE} we know that
\begin{equation}
\label{estS}
\| S(t)U_0\|_{\H}\leq C.
\end{equation}
The proof of Proposition \ref{REAS} is an immediate consequence
of the following two lemmas.

\begin{lemma}
\label{Lest}
For every initial datum $U_0\in \mathcal{B}_0$,
we have the estimate
$$
\|L(t)U_0\|_{\H}\leq C \e^{-\omega t},
$$
where $\omega>0$ is a universal constant.
\end{lemma}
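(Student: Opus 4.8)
The plan is to adapt, almost verbatim, the argument that proves Theorem~\ref{expstab}, taking advantage of the fact that the only coupling in \eqref{LPROBLEM} with the full solution is the term $B(u,v)$, which is \emph{linear} in $v$ with a coefficient $u$ bounded uniformly in $\VV$ by \eqref{estS}. All computations below are formal, to be justified within the same regularization scheme used in Section~\ref{secD}, and all constants $C$ are allowed to depend on $R_0$ and $\|f\|$ (the only information about $u$ that enters, through \eqref{estS}).

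First I would record the basic energy identity for \eqref{LPROBLEM}: testing the first equation by $v$ and the second by $\xi$ in $\M$, and using $b(u,v,v)=0$, one gets
$$
\ddt\mathcal{E}_L+\Pi_L=0,\qquad \mathcal{E}_L(t)=\tfrac12\|L(t)U_0\|_\H^2,
$$
where $\Pi_L=-\tfrac12\int_0^\infty\mu'(s)\|\xi^t(s)\|_1^2\d s=-\l T\xi,\xi\r_\M\geq 0$; thus $\mathcal{E}_L$ is nonincreasing, but the memory dissipation alone controls only $\|\xi\|_\M$ via \eqref{dafermos}. Next, exactly as in Lemma~\ref{PHIZERO}, I introduce $\Phi_L(t)=-\tfrac4\kappa\int_0^\infty\mu_\ast(s)\l\xi^t(s),v(t)\r_1\d s$, which satisfies for every $\varepsilon>0$
$$
\ddt\Phi_L+\|v\|_1^2\leq\frac{8\mu(s_\ast)}{\kappa^2}\Pi_L+\frac{4}{\alpha\kappa\varepsilon}\|\xi\|_\M^2+\alpha\varepsilon\|\partial_t v\|_1^2
$$
(the advective term does not appear, being absorbed into $\|\partial_t v\|_1$). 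Setting $\nu=\min\{\alpha\kappa\delta/32,1\}$ and $\Lambda_L=\mathcal{E}_L+\nu\varepsilon\Phi_L$, so that $\tfrac12\mathcal{E}_L\le\Lambda_L\le 2\mathcal{E}_L$ for $\varepsilon$ small, combining the two inequalities and absorbing $\|\xi\|_\M^2$ into $\Pi_L$ through \eqref{dafermos} exactly as in the proof of Theorem~\ref{Th-EE} yields
$$
\ddt\Lambda_L+2\varkappa\varepsilon\Lambda_L\leq\alpha\varepsilon^2\|\partial_t v\|_1^2
$$
for some $\varkappa>0$.

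To close the estimate I would test the first equation of \eqref{LPROBLEM} by $2\varepsilon^2\partial_t v$, getting
$$
2\varepsilon^2\|\partial_t v\|^2+2\alpha\varepsilon^2\|\partial_t v\|_1^2=-2\varepsilon^2\l\xi,\partial_t v\r_\M-2\varepsilon^2 b(u,v,\partial_t v).
$$
This is the only place where the structure of $B$ matters: using the three-dimensional bound $|b(u,v,\partial_t v)|\leq C\|u\|_1\|v\|_1\|\partial_t v\|_1$ together with \eqref{estS} and Young's inequality, one absorbs $\tfrac{\alpha\varepsilon^2}{2}\|\partial_t v\|_1^2$ and is left with $\alpha\varepsilon^2\|\partial_t v\|_1^2\le C\varepsilon^2(\|v\|_1^2+\|\xi\|_\M^2)\le C\varepsilon^2\Lambda_L$. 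Inserting this above and choosing $\varepsilon$ small so that $C\varepsilon\le\varkappa$, the Gronwall lemma gives $\Lambda_L(t)\le\Lambda_L(0)\e^{-\varkappa\varepsilon t}$, hence $\|L(t)U_0\|_\H\le C\e^{-\omega t}$ since $U_0\in\mathcal{B}_0$. Note that, in contrast with Theorem~\ref{expstab}, the nonlinear term is only linear in $v$, so that $\|v\|_1^2$ (rather than $\|v\|_1^4$) appears and is bounded directly by $C\Lambda_L$; consequently the rate $\omega$ is automatically independent of $\mathcal{E}(0)$ and no semigroup bootstrap is needed. I do not expect any genuine obstacle here: the only care required is the usual bookkeeping of the $\varepsilon$-powers so that the various smallness conditions are compatible, nothing beyond what was already done for Theorems~\ref{Th-EE} and \ref{expstab}.
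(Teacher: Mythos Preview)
Your proposal is correct and follows essentially the same route as the paper's proof: the paper simply says to repeat the argument of Theorem~\ref{expstab} with $(v,\xi)$ in place of $(u,\eta)$, using \eqref{estS} to control $b(u,v,\partial_t v)$, and you have spelled out exactly those details. Your observation that linearity in $v$ yields $\|v\|_1^2$ rather than $\|v\|_1^4$, so that the rate $\omega$ is automatically uniform and no bootstrap is required, is precisely the point underlying the paper's terse argument.
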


\begin{proof}
Let $\Lambda$ be as in the proof of
Theorem~\ref{expstab}, with $(v,\xi)$ in place of $(u,\eta)$. It is apparent that
$$
\frac14 \| L(t)U_0\|_{\H}^2 \leq \Lambda(t)\leq \| L(t)U_0\|_{\H}^2.
$$
Repeating the same reasonings, making use of~\eqref{estS}
to handle the term $b(u,v,\partial_t v)$, we find
the differential inequality
$$
\ddt\Lambda +2\omega \Lambda\leq 0
$$
for some $\omega>0$, and the Gronwall lemma completes the argument.
\end{proof}

\begin{lemma}
\label{Klemma}
For every initial datum $U_0\in \mathcal{B}_0$,
we have the estimate
$$
\| K(t)U_0\|_{\H^1}\leq C.
$$
\end{lemma}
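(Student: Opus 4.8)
The plan is to reproduce, at the level of the higher order space $\H^1$, the same scheme of auxiliary energy functionals used in Section~\ref{secD}, taking systematic advantage of the fact that all the \emph{lower} order norms attached to $K(t)U_0$ are already under control. Write $K(t)U_0=(w(t),\zeta^t)$. First I would collect this a priori information: from \eqref{estS} and Lemma~\ref{Lest},
\begin{equation*}
\|K(t)U_0\|_{\H}\le\|S(t)U_0\|_{\H}+\|L(t)U_0\|_{\H}\le C,
\end{equation*}
so that $\|u(t)\|_1$, $\|w(t)\|_1$, $\|w(t)\|$ and $\|\zeta^t\|_{\M}$ are all bounded by a constant $C=C(R_0,\|f\|)$; moreover, since $\varrho\le\vartheta$ and $v=L(t)U_0$ stays bounded in $\VV$, the modified force satisfies $\|\tilde f(t)\|_\varrho\le\|f\|_\varrho+\beta\|v(t)\|_{\varrho-2\vartheta}\le C$. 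Finally $w(0)=0$ and $\zeta^0=0$, hence the higher order energy $\E_1(t)=\tfrac12\|K(t)U_0\|_{\H^1}^2$ vanishes at $t=0$, and it suffices to bound $\E_1(t)$ uniformly in time.

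Next I would derive three differential inequalities, all within the usual Galerkin/regularization scheme. (i)~Multiplying the first equation of \eqref{KPROBLEM} by $Aw$ in $\HH$ and the second by $\zeta$ in $\M^1$, and setting $\Pi_1=-\tfrac12\int_0^\infty\mu'(s)\|\zeta(s)\|_2^2\,\d s=-\l T\zeta,\zeta\r_{\M^1}$, one gets the analogue of \eqref{diffeq1} (note $\l\beta A^{-\vartheta}w,Aw\r=\beta\|w\|_{1-\vartheta}^2\ge0$),
\begin{equation*}
\ddt\E_1+\beta\|w\|_{1-\vartheta}^2+\Pi_1+b(u,w,Aw)=\l\tilde f,Aw\r .
\end{equation*}
The crucial estimate is that of the three-dimensional convective term: using $\VV\hookrightarrow[L^6(\Omega)]^3$ for $u$, the interpolation $\|\nabla w\|_{L^3}\le C\|w\|_1^{1/2}\|w\|_2^{1/2}$, and then the bounds $\|u\|_1,\|w\|_1\le C$,
\begin{equation*}
|b(u,w,Aw)|\le C\|u\|_1\|w\|_1^{1/2}\|w\|_2^{3/2}\le\sigma\|w\|_2^2+C_\sigma ,\qquad\forall\,\sigma>0 ,
\end{equation*}
and likewise $\l\tilde f,Aw\r\le\|\tilde f\|_\varrho\|w\|_2\le\sigma\|w\|_2^2+C_\sigma$; the point is that the leftover constants $C_\sigma$ are harmless. (ii)~Since the left-hand side of the identity above lacks a $\|w\|_2^2$ term, I would introduce the higher order counterpart of $\Phi$ from Lemma~\ref{PHIZERO},
\begin{equation*}
\Phi_1(t)=-\frac4\kappa\int_0^\infty\mu_\ast(s)\l\zeta^t(s),w(t)\r_2\,\d s ,
\end{equation*}
which by the very same computation (with the $\WW$-inner product replacing the $\VV$-one) obeys
\begin{equation*}
\ddt\Phi_1+\|w\|_2^2\le\frac{8\mu(s_\ast)}{\kappa^2}\Pi_1+\frac{4}{\alpha\kappa\sigma}\|\zeta\|_{\M^1}^2+\alpha\sigma\|\partial_t w\|_2^2 .
\end{equation*}
(iii)~To control the $\|\partial_t w\|_2^2$ produced in (ii), I would multiply the first equation of \eqref{KPROBLEM} by $2\varepsilon^2A\partial_t w$, exactly as in the proof of Theorem~\ref{expstab}; using once more $\|u\|_1,\|w\|_1,\|\tilde f\|_\varrho\le C$ this gives
\begin{equation*}
\alpha\varepsilon^2\|\partial_t w\|_2^2+\beta\varepsilon^2\ddt\|w\|_{1-\vartheta}^2\le C\varepsilon^2\|w\|_2^2+C\varepsilon^2\|\zeta\|_{\M^1}^2+C\varepsilon^2 .
\end{equation*}

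Finally I would combine everything. Setting $\nu=\min\{\alpha\kappa\delta/32,1\}$ as in Section~\ref{secD} and $\Lambda_1=\E_1+\nu\varepsilon\Phi_1+\beta\varepsilon^2\|w\|_{1-\vartheta}^2$, one has $\tfrac12\E_1\le\Lambda_1\le2\E_1$ for $\varepsilon$ small. Adding (i)$+\nu\varepsilon\cdot$(ii)$+$(iii), choosing the free parameter $\sigma\sim\varepsilon$ so that the $\|\partial_t w\|_2^2$ contributions cancel and the net coefficient of $\|w\|_2^2$ becomes negative, and invoking the Dafermos inequality $\tfrac\delta2\|\zeta\|_{\M^1}^2\le\Pi_1$ together with the very definition of $\nu$ to dominate the memory errors — exactly as in the proof of Theorem~\ref{Th-EE} — one arrives, after fixing $\varepsilon>0$ small, at
\begin{equation*}
\ddt\Lambda_1+c\|w\|_2^2+c\|\zeta\|_{\M^1}^2\le C
\end{equation*}
for some $c>0$. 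By the Poincaré inequality $\|w\|_1^2\le\lambda_1^{-1}\|w\|_2^2$, the left-hand dissipation dominates $\Lambda_1$, so that $\ddt\Lambda_1+\omega\Lambda_1\le C$ for some $\omega>0$; since $\Lambda_1(0)=0$, the ordinary Gronwall lemma yields $\Lambda_1(t)\le C$, i.e.\ $\|K(t)U_0\|_{\H^1}\le C$. The one genuinely delicate step is the 3D estimate of $b(u,w,Aw)$: one must secure an \emph{arbitrarily small} factor in front of $\|w\|_2^2$, which is exactly what the uniform bound on $\|u\|_1$ provides. Everything else is the Section~\ref{secD} bookkeeping, but substantially lighter here, because the a priori $\H$-bound on $K(t)U_0$ turns the cubic-type term that forced the use of the Gronwall lemma with parameter in Section~\ref{secD} into a bare constant — so that Lemma~\ref{LEMMAPatonwall} is not needed and a single application of Gronwall closes the argument.
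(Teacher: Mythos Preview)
Your proof is correct and follows essentially the same route as the paper: the same three auxiliary functionals $\E_1,\Phi_1,\Psi_1$ (your step (iii) is exactly the differentiation of $\Psi_1=\beta\|w\|_{1-\vartheta}^2$), the same Dafermos cancellation of the $\|\zeta\|_{\M^1}^2$ error, the same $L^6$--$L^3$--$L^2$ estimate of $b(u,w,Aw)$, and the same closing Gronwall after fixing $\varepsilon$. The only differences are cosmetic (the paper uses $6/\kappa$ in $\Phi_1$ and $\nu=\min\{\alpha\kappa\delta/72,1\}$, and collects the convective and forcing terms into a single $\Theta_1$ estimated at the end rather than via an extra Young parameter $\sigma$), and your observation that Lemma~\ref{LEMMAPatonwall} is unnecessary here is exactly the simplification the paper also exploits.
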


\begin{proof}
We first note that, by virtue of \eqref{estS} and the previous Lemma~\ref{Lest},
\begin{equation}
\label{estK0}
\|K(t)U_0\|_{\H}\leq C
\end{equation}
and
$$
\|\tilde{f}(t)\|\leq C.
$$
We introduce the analogous functionals of Section~\ref{secD} for the variable
$(w,\zeta)$ in higher
order spaces. Namely,
$$
\E_1(t)= \frac12 \|K(t)U_0\|_{\H^1}^2,
$$
and
$$
\Pi_1(t)= -\frac12 \int_0^\infty\mu'(s)\|\zeta^t(s)\|^2_{\M^1}\d s.
$$
In particular, we have the identity
$$\Pi_1=-\l T A^{\frac12}\zeta, A^{\frac12}\zeta \r_{\M}
=-\l T \zeta,\zeta \r_{\M^1}.$$
Hence, testing the first equation of \eqref{KPROBLEM} by $Aw$ in $\HH$
and the second one by $\zeta$ in $\M^1$, we obtain
\begin{equation}
\label{Wdiffeq1}
\frac{\d}{\d t} \E_1 +b(u,w,Aw)+ \beta \|w\|_{1-\vartheta}^2+
\Pi_1=(\tilde{f},Aw).
\end{equation}
In a similar fashion, we define
\begin{align*}
\Phi_1(t) &=-\frac{6}{\kappa}\int_0^\infty\mu_\ast(s)\l \zeta^t(s),w(t)\r_2 \d s,\\
\noalign{\vskip1mm}
\Psi_1(t) &=\beta \|w(t)\|^2_{1-\vartheta}.
\end{align*}
Arguing as in Lemma~\ref{PHIZERO}, we deduce the differential inequality
\begin{equation}
\label{Wdiffeq2}
\frac{\d}{\d t}\Phi_1+2\|w\|^2_2\leq \frac{18\mu(s_\ast)}{\kappa^2}
\Pi_1+ \frac{9}{\alpha\kappa\varepsilon} \|\zeta\|_{\M^1}^2
+\alpha \varepsilon \| \partial_t w\|_2^2,
\end{equation}
while
$$
\frac{\d }{\d t}\Psi_1 +2\|\partial_t w\|_1^2
+ 2\alpha \|\partial_t w\|_2^2=  2\l \tilde{f}, A \partial_t w\r
-2\l \zeta, \partial_t w\r_{\M^1}-2b(u,w,A\partial_t w).
$$
Exploiting \eqref{estS},
$$
-2b(u,w,A \partial_t w)\leq C \|w\|_2\|\partial_t w\|_2,
$$
and we readily get
\begin{equation}
\label{Wdiffeq3}
\frac{\d }{\d t}\Psi_1+ \alpha \|\partial_t w\|_2^2
\leq  C \|\zeta\|_{\M^1}^2+ C \| w\|_2^2+C.
\end{equation}
At this point, setting
$$
\nu=\min\Big\lbrace \frac{\alpha \kappa \delta}{72}, 1\Big\rbrace,
$$
for some $\varepsilon>0$ small to be determined later, we define the functional
$$
\Lambda_1 (t)= \E_1(t)+\nu\varepsilon\Phi_1(t)+ \varepsilon^2 \Psi_1(t),
$$
which fulfills the controls
$$
\frac12 \E_1(t) \leq \Lambda_1(t)\leq 2\E_1(t).
$$
Collecting \eqref{Wdiffeq1}-\eqref{Wdiffeq3}, we find the differential inequality
\begin{align*}
&\frac{\d}{\d t} \Lambda_1+\nu\varepsilon
\| w\|_2^2+ \Big(1- \frac{18\mu(s_\ast) \nu \varepsilon}{\kappa^2} \Big)
\Pi_1-\Big(\frac{9\nu}{\alpha\kappa} + C \varepsilon^2\Big)
\| \zeta\|_{\M^1}^2\leq \Theta_1,
\end{align*}
where
$$
\Theta_1=-\nu \varepsilon\|w\|_2^2+C\varepsilon^2\|w\|_2^2+(\tilde{f},Aw)
-b(u,w,Aw)+ C.
$$
On account of \eqref{estS}-\eqref{estK0},
\begin{align*}
(\tilde{f},Aw)-b(u,w,Aw) &\leq \| \tilde{f}\|\| w\|_2+\|u\|_{L^6} \| \nabla w\|_{L^3} \| Aw\|\\
&\leq \frac{\nu\varepsilon}{4} \| w\|_2^2+ C \|w\|_1^{\frac12}\|w \|_2^{\frac32}+C\\
&\leq \frac{\nu\varepsilon}{2} \| w\|_2^2+ C.
\end{align*}
It is then apparent that (for $\varepsilon>0$ small)
$$\Theta_1\leq C.$$
Note that the constants $C$ above depend on $\varepsilon$, which however will be eventually fixed.
Indeed, once $\varepsilon$ is chosen suitably small, and recasting almost word by word
the proof of Theorem~\ref{Th-EE}, we end up with
$$
\frac{\d}{\d t} \Lambda_1+ \omega \Lambda_1 \leq C,
$$
for some $\omega>0$.
Since $\Lambda(0)=0$, a final application of the Gronwall lemma will do.
\end{proof}

For a semigroup $S(t)$ satisfying the continuity property
$S(t)\in{\mathcal C}(\H,\H)$ for every $t\geq 0$, as in our case,
having a \textit{compact} exponentially attracting set
is a sufficient condition in order for the \textit{global attractor} to exist (see e.g.\ \cite{Temam}).
By definition, this is the (unique) compact set $\mathfrak{A}\subset \H$ which is at the same time
\begin{itemize}
\item[$\diamond$] fully invariant, i.e.\
$S(t)\mathfrak{A}=\mathfrak{A}$ for every $t\geq 0$;
\item[$\diamond$] attracting for the semigroup, i.e\
$$
\lim_{t\to\infty}\big[\text{dist}_{\H}(S(t)\mathcal{B},\mathfrak{A})\big]=0
$$
for every bounded subset $\mathcal{B}\subset \H$.
\end{itemize}
Unfortunately, our attracting set ${\mathcal B}_\star$, although closed and bounded in $\H^1$,
is not compact in $\H$. Indeed, even if the embedding
$\VV\subset\HH$ is compact, the same cannot be said for the embedding $\M^1\subset \M$
(see \cite{PataZucchi} for a counterexample to compactness). Accordingly, the embedding $\H^1\subset \H$
is in general not compact as well.
Nevertheless, there is a general argument devised in \cite{PataZucchi} that allows to recover
the sought compactness with a little effort, producing a compact set ${\mathcal B}_\star'\subset {\mathcal B}_\star$
which is still exponentially attracting. In turn, this entails the existence of the global attractor
$\mathfrak{A}$. We do not enter into more details, since in the next sections we will
prove the existence of an exponential attractor. As a byproduct, this will yield the existence of
$\mathfrak{A}$, along with the finiteness of its fractal dimension.

\section{Exponential Attractors}
\label{secEA}

\begin{definition}
A compact set $\mathfrak{E}\subset \H$ is an \textit{exponential attractor} for $S(t)$ if
\begin{itemize}
\item[$\diamond$] $\mathfrak{E}$ is positively invariant, i.e.\ $S(t)\mathfrak{E}\subset\mathfrak{E}$ for every $t\geq 0$;
\item[$\diamond$] $\mathfrak{E}$ is exponentially attracting for the semigroup;
\item[$\diamond$] $\mathfrak{E}$ has finite fractal dimension in $\H$.
\end{itemize}
\end{definition}

Recall that the fractal dimension of $\mathfrak{E}$ in $\H$ is defined as
$$\text{dim}_{\H}(\mathfrak{E})=\limsup_{\varepsilon\to 0}
\frac{\ln N(\varepsilon)}{\ln \frac1\varepsilon},$$
where $N(\varepsilon)$ is
the smallest number of $\varepsilon$-balls of $\H$ necessary to cover $\mathfrak{E}$.

\smallskip
The main result of the paper reads as follows.

\begin{theorem}
\label{EAVoigt}
The dynamical system $S(t)$ on $\H$ possesses an exponential attractor $\mathfrak{E}$,
which is bounded in $\H^1$.
\end{theorem}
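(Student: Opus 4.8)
The strategy is the by-now classical abstract scheme for constructing exponential attractors via a discrete-time ``smoothing + contraction'' property, transferred to continuous time. The key ingredients are already in place: Theorem~\ref{Th-EE} gives the absorbing set $\mathcal{B}_0=\B_{\H}(R_0)$, and Proposition~\ref{REAS} gives the $\H^1$-bounded exponentially attracting ball $\mathcal{B}_\star=\B_{\H^1}(R_\star)$. Since $S(t)$ is a semigroup, it suffices to work on a suitable positively invariant set. The plan is as follows.

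\emph{Step 1: Reduction to a compact invariant absorbing set.} First I would fix a time $t_0\geq 0$ large enough that $S(t_0)\mathcal{B}_0$ is contained in a ball of $\H^1$ (possible by Proposition~\ref{REAS}, after enlarging $R_\star$ and absorbing the exponentially small $L(t)$-part), and set $\mathcal{B}_1=\overline{\bigcup_{t\geq t_0}S(t)\mathcal{B}_0}^{\,\H}$. Using the decomposition $S(t)=L(t)+K(t)$ of Section~\ref{secREAS} together with the counterexample-avoiding trick of \cite{PataZucchi}, one obtains that $\mathcal{B}_1$ is a compact, positively invariant, exponentially attracting subset of $\H$, bounded in $\H^1$. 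It then suffices to construct the exponential attractor for the restriction of $S(t)$ to $\mathcal{B}_1$, since its basin will automatically be all of $\H$ by transitivity of exponential attraction.

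\emph{Step 2: The discrete map and its smoothing/contraction property.} I would fix a time step $t_\ast>0$ and study the map $\mathbb{S}=S(t_\ast)$ on $\mathcal{B}_1$. The core estimate to establish is: for $U_0^1,U_0^2\in\mathcal{B}_1$, writing $S(t)U_0^i=(u^i,\eta^i)$ and taking the difference, one decomposes $\mathbb{S}U_0^1-\mathbb{S}U_0^2 = \mathbb{L}+\mathbb{K}$, where $\mathbb{L}$ solves a homogeneous linearized problem (of the type \eqref{LPROBLEM} with the difference $u^1-u^2$ entering the transport term) and $\mathbb{K}$ absorbs the $B(\bar u,u^2)$-type remainder plus the regularizing memory part. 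One shows
$$
\|\mathbb{L}\|_{\H}\leq \tfrac14\|U_0^1-U_0^2\|_{\H},\qquad
\|\mathbb{K}\|_{\H^1}\leq C\|U_0^1-U_0^2\|_{\H},
$$
for $t_\ast$ chosen suitably large, the first bound coming from the exponential stability argument of Lemma~\ref{Lest} (with the difference of velocities as the perturbation, controlled via Proposition~\ref{crtn} on $[0,t_\ast]$ and \eqref{estS}) and the second from repeating the $\H^1$-energy estimates of Lemma~\ref{Klemma}. Because $\H^1\subset\H$ is \emph{not} compact, the $\mathbb{K}$-part must be further processed exactly as in \cite{PataZucchi}: split $\zeta^{t_\ast}$ into a ``tail'' piece, controlled smallness in $\M$, and a piece living in a compact subset of $\M$; combined with compactness of $\WW\subset\VV$ this yields the required approximate-smoothing property, i.e. $\mathbb{S}$ maps $\mathcal{B}_1$ into a set that is covered by finitely many balls of radius $\tfrac12\|U_0^1-U_0^2\|_{\H}+$(controllably small), which is the input of the abstract theorem.

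\emph{Step 3: Abstract construction and Hölder-in-time continuity.} With the discrete smoothing/contraction property in hand, the abstract result on exponential attractors for maps (see e.g. the scheme in \cite{Temam} / the references on exponential attractors) produces a compact set $\mathfrak{E}_\ast\subset\mathcal{B}_1$ of finite fractal dimension in $\H$, positively invariant for $\mathbb{S}$ and exponentially attracting the iterates $\mathbb{S}^n$. To pass to continuous time one sets $\mathfrak{E}=\bigcup_{t\in[0,t_\ast]}S(t)\mathfrak{E}_\ast$; the only thing to check is that $(t,U)\mapsto S(t)U$ is Hölder (indeed Lipschitz in $U$ by Proposition~\ref{crtn}, and $\tfrac12$-Hölder in $t$ on $\mathcal{B}_1$, the latter using $\partial_t u\in L^2$ bounds from Corollary~\ref{ptu1} and the explicit representation \eqref{REP} for the memory component). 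This keeps the fractal dimension finite, preserves compactness and positive invariance, and upgrades discrete exponential attraction to the continuous statement. Finally $\mathfrak{E}\subset\mathcal{B}_1\subset\B_{\H^1}(R_\star)$ gives the asserted $\H^1$-boundedness, and the global attractor $\mathfrak{A}\subset\mathfrak{E}$ follows at once.

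\emph{Main obstacle.} The delicate point is \emph{not} the contraction of the $\mathbb{L}$-part nor the $\H^1$-boundedness of $\mathbb{K}$ — both are essentially Lemmas~\ref{Lest}--\ref{Klemma} revisited for differences of solutions, using \eqref{estS} and Proposition~\ref{crtn}. The real work is Step~2's compactness surgery on the memory variable: since $\M^1\subset\M$ fails to be compact, extra regularity of $\zeta$ in the internal variable $s$ does not by itself give a compact embedding, and one must run the Pata--Zucchi tail-cutting argument carefully, exploiting the Dafermos condition \eqref{dafermos} to kill the tail $\int_{s>s_0}$ uniformly and exploiting the representation \eqref{REP} plus $\partial_t w$ bounds to trap the ``head'' in a compact set. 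Getting this quantitatively consistent with the $\tfrac12$-contraction constant — i.e. choosing $t_\ast$, then $s_0$, then the finite cover, in the right order — is where the care is needed.
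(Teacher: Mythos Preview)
Your overall strategy is correct and is essentially an unpacked version of what the paper does: the paper invokes an abstract result tailored to memory equations (Lemma~\ref{EA}, taken from \cite{DGP}) and then verifies its four hypotheses (i)--(iv). Hypothesis (i) is Proposition~\ref{REAS}, (ii) is Corollary~\ref{ptu1}, and (iv) is a difference decomposition very close to your Step~2. The abstract lemma already encapsulates the Pata--Zucchi compactness surgery and the discrete-to-continuous passage you outline in Steps~2--3, so the paper does not redo those by hand. In that sense your route and the paper's are the same argument presented at different levels of abstraction; the paper's approach is shorter, while yours exposes the mechanism.

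There is, however, a genuine gap in your Step~1. You claim that $S(t_0)\mathcal{B}_0$ lies in a ball of $\H^1$ ``by Proposition~\ref{REAS}, after enlarging $R_\star$ and absorbing the exponentially small $L(t)$-part''. This does not follow: Proposition~\ref{REAS} gives only that $\mathcal{B}_\star$ attracts in the $\H$-distance; the $L(t_0)$-part is small in $\H$ but need not belong to $\H^1$ at all (its initial datum is merely in $\H$). Consequently $\mathcal{B}_1$ as you define it is not, a priori, bounded in $\H^1$. This matters downstream: in Step~2 the $\H^1$ estimate for the smoothing part $\mathbb{K}$ requires controlling terms like $b(\bar u,u_1,A\bar w)$, which needs a uniform bound on $\|u_1\|_2$, i.e.\ an $\H^1$ bound on the trajectories. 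The missing ingredient is precisely hypothesis (iii) of Lemma~\ref{EA}: an $\H^1$-dissipativity estimate for $\H^1$ initial data, showing that $\|S(t)U_0\|_{\H^1}\leq \psi(t)+R_1$ whenever $U_0\in\B_{\H^1}(R)$. The paper devotes a full page to this (``Verifying (iii)''), re-running the functional machinery of Section~\ref{secD} at the $\H^1$ level. Once you have this, you can build a positively invariant $\H^1$-bounded set and your Steps~2--3 go through; without it the argument stalls.

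A minor difference worth noting: in the paper's verification of (iv) the linear part $L(t)$ is taken to be the \emph{purely linear} homogeneous semigroup (no transport, no damping, no forcing), with $B(u,u)$ and $\beta A^{-\vartheta}u$ both pushed into $K(t)$. Your proposed $\mathbb{L}$ keeps a transport term. Either splitting works, but the paper's choice makes the contraction estimate for $L$ immediate (it is literally Theorem~\ref{expstab} for the linear problem) at the cost of slightly more terms to handle in the $\H^1$ estimate for $K$.
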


As a consequence of the existence of a compact attracting set,
$S(t)$ possesses the global attractor $\mathfrak{A}$,
which is the smallest among the compact attracting sets (hence contained in the exponential attractor $\mathfrak{E}$).

\begin{corollary} \label{GAVoigt}
The dynamical system $S(t)$ on $\H$ possesses the global attractor $\mathfrak{A}$.
Moreover, $\mathfrak{A}$ has finite fractal dimension in $\H$ and is a bounded in $\H^1$.
\end{corollary}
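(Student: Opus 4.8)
The plan is to deduce the corollary \emph{directly} from Theorem~\ref{EAVoigt}, using the exponential attractor $\mathfrak{E}$ as the compact attracting set that triggers the classical theory of global attractors. No new analysis of the equation is needed; the whole argument rests on two abstract facts about dissipative semigroups (see e.g.\ \cite{Temam}): that a continuous semigroup possessing a compact attracting set admits a global attractor, and that this attractor is minimal among closed attracting sets.

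First I would establish the existence of $\mathfrak{A}$. The semigroup enjoys the continuity property $S(t)\in\mathcal{C}(\H,\H)$ for every $t\geq 0$, recorded just after Proposition~\ref{crtn}. By Theorem~\ref{EAVoigt} the set $\mathfrak{E}$ is, by definition of exponential attractor, compact in $\H$ and exponentially---hence, in particular, ordinarily---attracting. Thus $S(t)$ possesses a compact attracting set, and the abstract existence theorem yields the global attractor $\mathfrak{A}$, realizable as the $\omega$-limit set $\omega(\mathfrak{E})$. Minimality of $\mathfrak{A}$ among compact attracting sets then forces the inclusion $\mathfrak{A}\subset\mathfrak{E}$, exactly as anticipated in the sentence preceding the statement.

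Second I would read off the two quantitative assertions from this single inclusion. For the fractal dimension I would invoke the elementary monotonicity of $\dim_\H$ under set inclusion: every finite covering of $\mathfrak{E}$ by $\varepsilon$-balls of $\H$ is also a covering of $\mathfrak{A}$, so $N_{\mathfrak{A}}(\varepsilon)\leq N_{\mathfrak{E}}(\varepsilon)$, whence $\dim_\H(\mathfrak{A})\leq\dim_\H(\mathfrak{E})<\infty$. For the regularity I would merely note that, again by Theorem~\ref{EAVoigt}, $\mathfrak{E}\subset\B_{\H^1}(R)$ for some $R>0$; combined with $\mathfrak{A}\subset\mathfrak{E}$ this gives $\mathfrak{A}\subset\B_{\H^1}(R)$, i.e.\ $\mathfrak{A}$ is bounded in $\H^1$.

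There is no genuine analytical obstacle at this stage: the entire substance of the corollary is already packaged inside Theorem~\ref{EAVoigt}, and the deduction uses only minimality of the global attractor together with monotonicity of the fractal dimension. The sole point requiring a modicum of care is to invoke the abstract existence result with its correct hypotheses---continuity of $S(t)$ and the presence of a compact attracting set---both of which are in hand, the former from Proposition~\ref{crtn} and the latter from the compactness of $\mathfrak{E}$ asserted in Theorem~\ref{EAVoigt}.
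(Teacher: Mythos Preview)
Your proposal is correct and follows exactly the approach the paper takes: the corollary is stated as an immediate consequence of Theorem~\ref{EAVoigt}, with the global attractor obtained from the existence of the compact attracting set $\mathfrak{E}$, and its finite fractal dimension and $\H^1$-boundedness inherited from the inclusion $\mathfrak{A}\subset\mathfrak{E}$ via minimality. The paper compresses all of this into the single sentence preceding the corollary, so your write-up is simply a more detailed rendering of the same argument.
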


The proof of Theorem \ref{EAVoigt}, carried out in the next section, is based on
an abstract result from \cite{DGP} (see Theorem 5.1 therein), that we report here below as a lemma,
in a version specifically tailored to fit our particular problem.
To this end, we will make use of the projections $\mathbb{P}_1$ and $\mathbb{P}_2$
of $\H$ onto its components $\VV$ and $\M$, namely,
$$
\mathbb{P}_1 (u,\eta) = u\qquad\text{and}\qquad  \mathbb{P}_2(u,\eta) = \eta.
$$

\begin{lemma}
\label{EA}
Let the following assumptions hold.
\smallskip
\begin{itemize}
\item[\textbf{(i)}] There exists $R_\star > 0$ such that the ball $\mathcal{B}_\star=\B_{\H^1} (R_\star)$
is exponentially attracting.
\smallskip
\item[\textbf{(ii)}] For every $R \geq 0$ and every $\theta > 0$
sufficiently large,
$$
\int_\theta^{2\theta}\| \partial_t u(t)\|_{1}^2 \d t\leq \Q(R+\theta),
$$
for all $u(t) = \mathbb{P}_1 S(t)U_0$ with $U_0 \in \B_{\H^1} (R)$.
\smallskip
\item[\textbf{(iii)}] There exists $R_1> 0$ with the following property:
for any given $R\geq 0$, there exists a nonnegative function $\psi$ vanishing at infinity
such that
$$
\|S(t)U_0\|_{\H^1}\leq \psi(t) + R_1,
$$
for all $U_0 \in \B_{\H^1}(R)$.
\smallskip
\item[\textbf{(iv)}] For every fixed $R\geq 0$, the semigroup $S(t)$ admits
a decomposition of the form
$$S(t)=L(t)+K(t)$$
satisfying for all initial data $U_{0i}\in \B_{\H^1}(R)$
\begin{align*}
\|L(t)U_{01}-L(t)U_{02}\|_{\H} &\leq \psi(t)\|U_{01}-U_{02}\|_{\H},\\
\|K(t)U_{01}-K(t)U_{02}\|_{\H^1} &\leq \Q(t)\|U_{01}-U_{02}\|_{\H}.
\end{align*}
Here, both $\Q$ and
the nonnegative function $\psi$ vanishing at infinity depend on $R$.
Moreover, the function
$$\bar\zeta^t = \mathbb{P}_2 K(t)U_{01}-\mathbb{P}_2 K(t)U_{02}$$
fulfills the Cauchy problem
$$
\begin{cases}
\partial_t\bar\zeta^t= T\bar\zeta^t+\bar w(t),\\
\bar\zeta^0=0,
\end{cases}
$$
for some $\bar w$ satisfying the estimate
$$
\|\bar w(t)\|_{1} \leq \Q(t)\|U_{01}-U_{02}\|_{\H}.
$$
\end{itemize}
Then $S(t)$ possesses an exponential attractor $\mathfrak{E}$ contained in the ball $\B_{\H^1}(R_1)$.
\end{lemma}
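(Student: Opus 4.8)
\emph{Overall approach.} The statement is a problem-adapted repackaging of the abstract exponential attractor theorem of \cite{DGP}, so the plan is to reprise that construction and indicate precisely where each of \textbf{(i)}--\textbf{(iv)} is used. As usual for equations with memory, one first produces a \emph{discrete} exponential attractor for a time-$\tau$ map on a conveniently chosen positively invariant set, and then lifts it to a continuous-time exponential attractor.

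\emph{Reduction and discretization.} Using \textbf{(i)} (which makes $\B_{\H^1}(R_\star)$ exponentially attracting in $\H$) together with \textbf{(iii)} (which bounds trajectories uniformly in $\H^1$ and drives them asymptotically into $\B_{\H^1}(R_1)$), I would build a set $\mathcal{V}$ — for instance the $\H$-closure of $\bigcup_{t\geq\theta}S(t)\B_{\H^1}(R_\star)$ for $\theta$ large — which is positively invariant, bounded in $\H^1$, and still exponentially attracts every bounded subset of $\H$; moreover \textbf{(iii)} guarantees that $S(n\tau)\mathcal{V}$ shrinks in $\H^1$ towards $\B_{\H^1}(R_1)$, which will account for the final containment of $\mathfrak{E}$. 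Then fix $\tau>0$ so large that the function $\psi$ in \textbf{(iv)} satisfies $\psi(\tau)\leq\frac14$, and consider the discrete dynamical system generated by $\mathcal{S}:=S(\tau)$ on $\mathcal{V}$.

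\emph{The core step: effective compactness of the smoothing part.} By \textbf{(iv)}, $\mathcal{S}=\mathcal{L}+\mathcal{K}$ on $\mathcal{V}$ with $\mathcal{L}=L(\tau)$ a $\frac14$-contraction in the $\H$-norm and $\mathcal{K}=K(\tau)$ Lipschitz from $(\H,\|\cdot\|_\H)$ into $(\H^1,\|\cdot\|_{\H^1})$. The obstruction — and I expect this to be the main difficulty — is that $\H^1$ is \emph{not} compactly embedded in $\H$, since $\M^1$ is not compactly embedded in $\M$ (see \cite{PataZucchi}), so the classical Efendiev--Miranville--Zelik argument does not apply off the shelf. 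The resolution rests on the transport structure furnished by \textbf{(iv)}: for $U_{0i}\in\mathcal{V}$, the memory part of $\mathcal{K}U_{01}-\mathcal{K}U_{02}$, call it $\bar\zeta^\tau$, is bounded in $\M^1$ by $\Q(\tau)\|U_{01}-U_{02}\|_\H$ and, being the mild solution of $\partial_t\bar\zeta=T\bar\zeta+\bar w$ with $\bar\zeta^0=0$, admits the representation $\bar\zeta^\tau(s)=\int_0^{\min\{s,\tau\}}\bar w(\tau-\sigma)\,\d\sigma$; hence its $s$-derivative equals $\bar w(\tau-s)\mathbf{1}_{\{s<\tau\}}$, bounded in $\VV$ by $\Q(\tau)\|U_{01}-U_{02}\|_\H$, and $\bar\zeta^\tau$ is constant in $s$ for $s>\tau$. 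Combining an Aubin--Lions compactness on finite $s$-intervals (from the $\M^1$ bound and the $\VV$ bound on $\partial_s\bar\zeta^\tau$) with the decay of $\int_\tau^\infty\mu$ controlling the flat tail — i.e.\ the Pata--Zucchi compactness criterion \cite{PataZucchi} — and using the compact embedding $\WW\hookrightarrow\VV$ for the first component, one obtains a fixed compact set $\mathcal{C}\subset\H$ with $\mathcal{K}U_{01}-\mathcal{K}U_{02}\in\Q(\tau)\|U_{01}-U_{02}\|_\H\,\mathcal{C}$ for all $U_{0i}\in\mathcal{V}$. This compact spread of $\mathcal{K}$ is the substitute for the missing compact embedding.

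\emph{Discrete, then continuous, exponential attractor.} With the contraction-plus-compact-spread splitting of $\mathcal{S}$ in hand, the abstract construction of \cite{DGP} produces a discrete exponential attractor $\mathfrak{E}_\sharp\subset\mathcal{V}$ for $\mathcal{S}$: iterating $\mathcal{S}$, the $\frac14$-contraction of $\mathcal{L}$ together with a finite covering of $\mathcal{C}$ keeps the $\H$-covering numbers geometrically controlled, so the resulting nested finite sets converge to a compact, positively $\mathcal{S}$-invariant set of finite fractal dimension in $\H$ that attracts $\mathcal{V}$ exponentially in discrete time; by the shrinking property noted above one arranges $\mathfrak{E}_\sharp\subset\B_{\H^1}(R_1)$. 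Finally set $\mathfrak{E}=\bigcup_{t\in[0,\tau]}S(t)\mathfrak{E}_\sharp$. Positive invariance under the whole semigroup and exponential attraction of all bounded subsets of $\H$ follow from $S(t+n\tau)=S(t)\mathcal{S}^n$ and the uniform Lipschitz dependence of $S(t)$ on the initial data on $\mathcal{V}$ (Proposition~\ref{crtn}); and $\dim_\H\mathfrak{E}<\infty$ is inherited from $\dim_\H\mathfrak{E}_\sharp$ because $(t,U_0)\mapsto S(t)U_0$ is Hölder continuous on $[0,\tau]\times\mathfrak{E}_\sharp$ — Lipschitz in $U_0$ by Proposition~\ref{crtn}, and, in $t$, $\frac12$-Hölder into $\VV$ for $u(\cdot)$ by \textbf{(ii)} and Cauchy--Schwarz, while $\eta^{\,\cdot}$ is controlled in $\M$ through the representation \eqref{REP}. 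Since $\mathfrak{E}\subset\B_{\H^1}(R_1)$ by construction, this is the desired exponential attractor.
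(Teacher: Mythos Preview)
Your proposal is correct and faithfully reconstructs the argument, but note that the paper does not prove this lemma at all: it is stated as a specialization of Theorem~5.1 in \cite{DGP} and simply cited, with the remark that the one extra hypothesis in \cite{DGP} is trivially satisfied here. What you have written is essentially a sketch of the \cite{DGP} proof itself---the discretization via a time-$\tau$ map, the contraction/compact-smoothing splitting from \textbf{(iv)}, the Pata--Zucchi compactness argument \cite{PataZucchi} to bypass the non-compact embedding $\M^1\subset\M$ by exploiting the explicit representation of $\bar\zeta^\tau$, and the lift to continuous time using \textbf{(ii)} for H\"older regularity in $t$---so your approach and the paper's (by reference) coincide.
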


\begin{remark}
Actually, in the abstract result from \cite{DGP} a further assumption is needed,
involving a certain operator that in our case is just the identity (and the assumption is trivially satisfied).
\end{remark}

\section{Proof of Theorem \ref{EAVoigt}}
\label{secPT}

\noindent
The proof amounts to verifying the four points of the above Lemma~\ref{EA}.
Indeed, {\bf (i)} is the content of Proposition~\ref{REAS}, while {\bf (ii)}
is an immediate consequence of the continuous
embedding $\H^1\subset\H$ and Corollary~\ref{ptu1}. Accordingly, we are left to show the
validity of {\bf (iii)} and {\bf (iv)}. In what follows,
the generic positive constant $C$ may depend on $\|f\|$ and on the radius $R_0$ of the absorbing set $\mathcal{B}_0$.

\subsection*{$\bullet$ Verifying (iii)}
Given $R\geq 0$, let us consider the ball $\B_{\H^1}(R)$. We easily infer from
the continuous embedding $\H^1\subset \H$ that
$$
\B_{\H^1}(R)\subset \B_{\H}(\Q(R)).
$$
Therefore, on account of Theorem \ref{Th-EE}, there exists $t_\e=t_\e(R)$ such that
\begin{equation}
\label{abslowernorm}
\begin{cases}
\|S(t)\B_{\H^1}(R)\|\leq \Q(R), & \forall \, t\leq t_\e,\\
\| S(t)\B_{\H^1}(R)\|\leq R_0, & \forall \, t\geq t_\e.
\end{cases}
\end{equation}
Taking an arbitrary $U_0 \in \B_{\H^1}(R)$, we define the higher-order
energy functional
$$
\E_1(t)= \frac12 \|S(t)U_0\|_{\H^1}^2,
$$
and the nonnegative functional
$$
\Pi_1(t)= -\frac12 \int_0^\infty\mu'(s)\|\eta^t(s)\|^2_{\M^1}\d s.
$$
We recall the identity
$$\Pi_1=-\l T \eta,\eta \r_{\M^1}.$$
Testing the first equation of \eqref{PROBLEM} by $Au$ in $\HH$ and the
 second one by $\eta$ in $\M^1$, we obtain the differential equality
\begin{equation}
\label{Udiffeq1}
\frac{\d}{\d t} \E_1 + \beta \|u\|_{1-\vartheta}^2+ \Pi_1(\eta)= (f,Au)-b(u,u,Au).
\end{equation}
For any $t\leq t_\e$, according to \eqref{abslowernorm} and exploiting the standard Sobolev embeddings, we have
\begin{align*}
(f,Au)-b(u,u,Au)&\leq \|f\|\|u\|_2 +\| u\|_{L^6} \|\nabla u\|_{L^3}\| u\|_2\\
&\leq C\|u\|_2 + C \| u\|_1 \| u\|_2^2\\
&\leq \Q(R)\|u\|_2^2+C,
\end{align*}
which in turn gives
$$
\frac{\d}{\d t} \E_1 \leq \Q(R)\E_1+ C.
$$
The Gronwall lemma entails
\begin{equation}
\label{highest1}
\E_1(t)\leq \Q(R)\e^{\Q(R)t_\e}=\Q(R), \quad \forall \, t\leq t_\e.
\end{equation}
In order to show the existence of an absorbing set for the semigroup $S(t)$
on $\H^1$, and similarly to the proof of Lemma \ref{Klemma}, we
define some further functionals.
For any $t\geq t_\e$, let
$$
\Phi_1(t)=-\frac{6}{\kappa}\int_0^\infty\mu_\ast(s)\l \eta^t(s),u(t)\r_2 \d s,
$$
and
$$
\Psi_1(t)=\beta \|u(t)\|^2_{1-\vartheta}.
$$
Arguing as in Lemma \ref{Klemma},
\begin{equation}
\label{Udiffeq2}
\frac{\d}{\d t}\Phi_1+ 2\|u\|^2_2
\leq \frac{18\mu(s_\ast)}{\kappa^2}\Pi_1
+\frac{9}{\alpha\kappa\varepsilon} \| \eta\|_{\M^1}^2
+\alpha \varepsilon \| \partial_t u\|_2^2,
\end{equation}
whereas a differentiation in time yields
$$
\frac{\d }{\d t}\Psi_1 +2\|\partial_t u\|_1^2
+2\alpha \|\partial_t u\|_2^2
= 2\l f, A\partial_t u\r -2\l \eta,\partial_t u\r_{\M^1}
-2b(u,u,A\partial_t u).
$$
Making use of \eqref{abslowernorm},
$$
-2b(u,u,A\partial_t u)\leq C \|u\|_2\|\partial_t u\|_2,
$$
and we end up with
\begin{equation}
\label{Udiffeq3}
\frac{\d }{\d t}\Psi_1 +\alpha \|\partial_t u\|_2^2
\leq  C\| u\|_2^2 + C \|\eta\|_{\M^1}^2+C.
\end{equation}
Denoting
$$
\nu=\min\Big\lbrace \frac{\alpha \kappa \delta}{72}, 1\Big\rbrace,
$$
we define
$$
\Lambda_1(t)=\E_1(t)+\nu\varepsilon\Phi_1(t)+ \varepsilon^2 \Psi_1(t),
$$
depending on $\varepsilon>0$ to be determined later.
As customary, we have the controls
\begin{equation}
\label{nore4}
\frac12 \E_1(t) \leq \Lambda_1(t)\leq 2\E_1(t),
\end{equation}
provided $\varepsilon$ is small enough.
Moreover, adding \eqref{Udiffeq1}, \eqref{Udiffeq2} and
\eqref{Udiffeq3}, we are led to the differential inequality
$$
\frac{\d}{\d t} \Lambda_1+\nu\varepsilon\| u\|_2^2
+\Big(1-\frac{18\mu(s_\ast)\nu\varepsilon}{\kappa^2}\Big) \Pi_1
-\Big(\frac{9\nu}{\alpha\kappa}+C\varepsilon^2\Big)\|\eta\|_{\M^1}^2\leq \Theta_1,
$$
having set
$$
\Theta_1=-\nu\varepsilon\|u\|_2^2+(f,Au)-b(u,u,Au)+ C\varepsilon^2\|u\|^2_2+C.
$$
Since
$$
\frac{\delta}{2}\|\eta\|_{\M^1}^2\leq \Pi_1,
$$
we deduce that
\begin{align*}
\Big(1-\frac{18\mu(s_\ast)\nu\varepsilon}{\kappa^2}\Big) \Pi_1
-\Big(\frac{9\nu}{\alpha\kappa}+C\varepsilon^2\Big)\|\eta\|_{\M^1}^2&\geq
\frac12\Pi_1-\Big(\frac{9\nu}{\alpha\kappa}+C\varepsilon^2\Big)\|\eta\|_{\M^1}^2 \\
&\geq\Big(\frac\delta4- \frac\delta8-C\varepsilon^2\Big)\|\eta\|_{\M^1}^2\\
&\geq \frac{\delta}{16}\|\eta\|_{\M^1}^2,
\end{align*}
provided that $0<\varepsilon<\frac{\sqrt{\delta}}{4\sqrt{C}}$.
Accordingly, for $\varepsilon>0$ sufficiently small, 
$$
\ddt \Lambda_1+ \varkappa \varepsilon \Lambda_1\leq \Theta_1,
$$
for some $\varkappa>0$.
Regarding the right-hand side $\Theta_1$, by interpolation and \eqref{abslowernorm},
we have the estimate
\begin{align*}
(f,Au)-b(u,u,Au)& \leq \|f\|\|u\|_2 + \|u\|_{L^6}\|\nabla u\|_{L^3}\|u\|_2 \\
&\leq C\|u\|_2 + C\| u\|_2^{\frac32}\\
\noalign{\vskip1mm}
&\leq \varepsilon^2 \|u\|_2^2+ C.
\end{align*}
Then, up to choosing $\varepsilon>0$ small enough,
we find
$$
\Theta\leq C,
$$
which in turn entails
$$
\frac{\d}{\d t} \Lambda_1+ \varkappa \varepsilon \Lambda_1 \leq C.
$$
The Gronwall lemma on the time-interval $(t_\e,t)$ together with \eqref{nore4}
yield
\begin{equation}
\label{highest2}
\E_1(t)\leq \Q(R)\e^{-\omega (t-t_\e)}+ C,  \quad \forall \, t\geq t_\e,
\end{equation}
with $\omega=\varkappa\varepsilon$.
Collecting \eqref{highest1} and \eqref{highest2}, we readily get
$$
\E_1(t)\leq \Q(R)\e^{-\omega t}+ C,\quad \forall \, t\geq 0,
$$
implying the desired conclusion.
\qed

\subsection*{$\bullet$ Verifying (iv)}
Given any initial datum $U_0=(u_0,\eta_0)$,
we consider this time the trivial splitting
$$
S(t)U_0=L(t)U_0+K(t)U_0
$$
where the maps
$$L(t)U_0=(v(t),\xi^t)\qquad\text{and}\qquad
K(t)U_0=(w(t),\zeta^t)$$
solve the problems
$$
\begin{cases}
\partial_t  \left(v + \alpha Av\right)
+\displaystyle \int_0^\infty  \mu(s) A \xi(s)\d s=0, & \\
\partial_t \xi = T \xi + v,&\\
\noalign{\vskip2mm}
(v(0),\xi^0)=(u_0,\eta_0),
\end{cases}
$$
and
$$
\begin{cases}
\partial_t  \left(w + \alpha Aw\right)
+\displaystyle \int_0^\infty  \mu(s) A \zeta(s)\d s
=f-B(u,u)-\beta A^{-\vartheta}u, & \\
\partial_t \zeta = T \zeta + w,&\\
\noalign{\vskip2mm}
(w(0),\zeta^0)=(0,0).
\end{cases}
$$
Note that $L(t)$ is a strongly continuous linear semigroup on $\H$.
Besides, $L(t)$ is exponentially stable. This
can be easily seen by recasting the proof of Theorem~\ref{expstab}.

Let now $R\geq 0$ be fixed, and let
$U_{01},U_{02} \in \B_{\H^1}(R)$.
Along this proof, the generic positive constant $C$ is allowed to depend on $R$.
Then, we decompose the difference
$$
(\bar u(t),\bar\eta^t)=S(t)U_{01}-S(t)U_{02}$$
into the sum
$$
(\bar u(t),\bar\eta^t)=(\bar v(t),\bar\xi^t)+(\bar w(t),\bar\zeta^t),
$$
where
$$
(\bar v(t),\bar\xi^t)=L(t)U_{01}-L(t)U_{02}, \quad \text{and}
 \quad (\bar w(t),\bar \zeta^t)= K(t)U_{01}-K(t)U_{02}.
$$
We first note that, on account of $\textbf{(iii)}$,
\begin{equation}
\label{bound12}
\| S(t)U_{0i}\|_{\H^1}\leq C.
\end{equation}
Besides, the exponential stability of $L(t)$ implies the existence of a universal constant $\omega>0$
such that
\begin{equation}
\label{boundL12}
\|L(t)U_{01}-L(t)U_{02}\|_{\H}\leq C\e^{-\omega t} \|U_{01}-U_{02}\|_{\H}.
\end{equation}
We are left to prove the desired estimate for the difference
$(\bar w,\bar \zeta)$, solution to the system
$$
\begin{cases}
\partial_t  \left(\bar w + \alpha A\bar w\right)
+\displaystyle \int_0^\infty  \mu(s) A \bar\zeta(s)\d s
 =-B(\bar u,u_1)-B(u_2,\bar u)-\beta A^{-\vartheta}\bar u, & \\
\partial_t \bar\zeta = T \bar\zeta + \bar w,&\\
\noalign{\vskip2mm}
(\bar w(0),\bar\zeta^0)=(0,0).
\end{cases}
$$
To this end, introducing the higher-order energy
$$
\E_1(t)=\frac12 \|K(t)U_{01}-K(t)U_{02}\|_{\H^1}^2,
$$
we find the identity
$$
\ddt \E_1 -\frac12 \int_0^\infty\mu'(s)\|\bar\zeta^t(s)\|^2_{\M^1}\d s
=-b(\bar u,u_1,A\bar w)-b(u_2,\bar u,A\bar w)-\beta\l A^{-\vartheta}\bar u, A\bar w\r.
$$
Owing to \eqref{bound12}, and appealing to the embedding $\WW\subset [L^\infty(\Omega)]^3$, we have the controls
\begin{align*}
-b(\bar u,u_1,A\bar w)&\leq C \| u_1\|_2 \|\bar u\|_1 \|\bar w\|_2\\
&\leq C \|\bar v\|_1 \|\bar w\|_2+ C \|\bar w\|_2^2\\
&\leq C\|\bar w\|_2^2+C \|\bar v\|_1^2,
\end{align*}
\begin{align*}
-b(u_2,\bar u,A\bar w)&\leq \| u_2\|_{L^\infty} \|\bar u\|_1 \|\bar w\|_2\\
&\leq C\|\bar v\|_1\|\bar w\|_2 +C\|\bar w\|_2^2\\
&\leq C\|\bar w\|_2^2+C \|\bar v\|_1^2,
\end{align*}
and
$$
-\beta\l A^{-\vartheta}\bar u,A\bar w\r \leq -\beta\l A^{-\vartheta}\bar v,A\bar w\r
\leq C\|\bar w\|_2^2 +C \|\bar v\|_1^2.
$$
Therefore, we arrive at
$$
\ddt \E_1\leq C\E_1 + C \|\bar v\|_1^2.
$$
Recalling that $(\bar w(0),\bar \zeta^0)=(0,0)$ and exploiting \eqref{boundL12}, an application of the
Gronwall lemma provides the sought inequality
$$
\E_1(t)\leq C\int_0^t \e^{C(t-y)} \|\bar v(y)\|_1^2 \d y
\leq C \e^{Ct} \|U_{01}-U_{02}\|^2.
$$
In particular, we learn that
$$
\|\bar w(t)\|_{1} \leq C \e^{Ct}\|U_{01}-U_{02}\|_{\H},
$$
which is exactly the last point of $\textbf{(iv)}$ to be verified.
\qed

\section{Further Developments}
\label{secFD}

\noindent
In this final section, we discuss some open issues that might be the object of future investigations.

\subsection*{I}
Exploiting the techniques of this work, it is actually possible to study
generalized versions of \eqref{mezzo}, such as
$$
\partial _t\left(u + \alpha Au\right)
+\int_0^\infty  g(s) A u(t-s)\d s + \beta  h(u)
+ B(u,u)=f.
$$
for some suitable nonlinearity $h(u)$, e.g.\ the Brinkman-Forchheimer correction term
(see e.g\ \cite{STR})
$$h(u)=a|u|^p u+bu, \quad p>0,$$
where $a>0$ and $b\in \R$.

\subsection*{II}
A further interesting problem is concerned with the analysis of nonautonomous NSV equations
in presence of singularly oscillating external forces, depending on $\varepsilon>0$, of the form
$$
f^\varepsilon(t)=f_0(t)+\varepsilon^{-\rho} f_1\Big(\frac{t}{\varepsilon}\Big),\quad\rho>0,
$$
along with the formal limit obtained as $\varepsilon\to 0$,
corresponding to
$$f^0(t)=f_0(t).$$
The existence of global
attractors depending on $\varepsilon>0$ and their stability as $\varepsilon \to 0$
for NSV equations without memory has been addressed
in \cite{NSV-singosc} (see also \cite{CPV,CV} for Navier-Stokes models).
In connection with memory equations,
the techniques to handle singularly oscillating forces have been
introduced in the novel paper \cite{CCP}.

\subsection*{III}
In place of~\eqref{mezzo}
we may consider the family of equations
\begin{equation}
\label{NSVeps}
\partial _t\left(u + \alpha Au\right)
+\int_0^\infty  g_\varepsilon(s) A u(t-s)\d s + \beta  u
+ B(u,u)=f.
\end{equation}
where
$$g_\varepsilon(s)=\frac{1}{\varepsilon}g\Big(\frac{s}\varepsilon\Big),\quad \varepsilon\in(0,1],$$
is a rescaling of the original kernel $g$.
Such a rescaling has been firstly introduced in~\cite{Amnesia}.
We assume without loss of generality that
$$\int_0^\infty g(s)\d s=1.$$
In the formal limit $\varepsilon\to 0$
we have the distributional convergence
$$g_\varepsilon\to\delta_0,$$
$\delta_0$ being the Dirac mass at $0^+$. Accordingly, \eqref{NSVeps}
collapses into the NSV equation with Ekman damping
\begin{equation}
\label{NSVzero}
\partial_t  \left(u - \alpha \Delta u\right) - \Delta u
+ \beta u + B(u,u)=f.
\end{equation}
Writing as before $\mu=-g'$, and defining
$$\mu_\varepsilon(s)=\frac{1}{\varepsilon^2}\mu\Big(\frac{s}\varepsilon\Big),$$
both \eqref{NSVeps} and \eqref{NSVzero} generate dynamical systems $S_\varepsilon(t)$,
acting on their respective phase spaces
$$\H_\varepsilon=\VV\times \M_\varepsilon,$$
where
$$
\M_\varepsilon=
\begin{cases}
L^2_{\mu_\varepsilon}(\R^+;\VV)&\text{ if }\varepsilon>0,\\
\{0\}&\text{ if }\varepsilon=0.
\end{cases}
$$
Then, for all $\varepsilon\in[0,1]$, the semigroups $S_\varepsilon(t)$ possess exponential attractors
$\mathfrak{E}_\varepsilon$ on $\H_\varepsilon$. Besides, by means of the general theory
developed in~\cite{GMPZ}, one can show that the family $\{\mathfrak{E}_\varepsilon\}$
fulfills the following properties:
\begin{itemize}
\item[$\diamond$] The exponential attraction rate of $\mathfrak{E}_\varepsilon$
is uniform with respect to $\varepsilon$; namely, there exist
$\Q$ and $\omega>0$, both independent of $\varepsilon$, such that
$$
\text{dist}_{\H_\varepsilon}(S_\varepsilon(t)\mathbb{B}_{\H_\varepsilon}(R),\mathfrak{E}_\varepsilon)
\leq \Q(R)\e^{-\omega t},\quad\forall\,R\geq 0.
$$
\item[$\diamond$] The (finite) fractal dimension $\text{dim}_{\H_\varepsilon}(\mathfrak{E}_\varepsilon)$
is uniformly bounded
with respect to $\varepsilon$.
\item[$\diamond$] The family $\{\mathfrak{E}_\varepsilon\}$ is (H\"older) continuous
at $\varepsilon=0$; namely, there exist constants $C\geq 0$ and $\alpha\in(0,1)$ such that
$$
\text{dist}^{\rm sym}_{\H_{\varepsilon}}(\mathfrak{E}_\varepsilon,\mathfrak{E}_0)\leq
C \varepsilon^\alpha,
$$
where
$$
\text{dist}^{\rm sym}_{\H_{\varepsilon}}(\mathfrak{E}_\varepsilon,\mathfrak{E}_0)=
\max\big\{\text{dist}_{\H_{\varepsilon}}(\mathfrak{E}_\varepsilon,\mathfrak{E}_0),
\text{dist}_{\H_{\varepsilon}}(\mathfrak{E}_0,\mathfrak{E}_\varepsilon)\big\}
$$
is the symmetric Hausdorff distance in $\H_\varepsilon$.
\end{itemize}
A similar project has been carried out in \cite{GTM} for the equation
$$
\partial _t\left(u + \alpha Au\right)
+\nu Au+(1-\nu)\int_0^\infty  g_\varepsilon(s) A u(t-s)\d s + \beta  u
+ B(u,u)=f
$$
where $\nu\in(0,1)$ is a fixed parameter. However, in this case, the presence of the
instantaneous kinematic viscosity $\nu A u$ renders the problem easier.

\subsection*{IV}
An enhanced version of the previous analysis would be letting $\beta>0$ in \eqref{NSVeps}
to be a free parameter as well, and then considering the double limit
$$\varepsilon\to 0\qquad\text{and}\qquad
\beta\to 0.$$
In this situation, we have a family of two-parameter semigroups $S_{\varepsilon,\beta}(t)$
acting on $\H_\varepsilon$,
the limiting case $S_{0,0}(t)$ corresponding to the
NSV equation
$$
\partial_t  \left(u - \alpha \Delta u\right) - \Delta u + B(u,u)=f.
$$
For every fixed $\varepsilon>0$ and $\beta>0$, the semigroup $S_{\varepsilon,\beta}(t)$ possesses an
exponential attractor $\mathfrak{E}_{\varepsilon,\beta}$, and the same is true for the limiting
semigroup $S_{0,0}(t)$ (see~\cite{CZG}). Again, the task is proving the convergence
$$\mathfrak{E}_{\varepsilon,\beta}\to \mathfrak{E}_{0,0}$$
as $\varepsilon\to 0$ and $\beta\to 0$,
in the sense of the symmetric Hausdorff distance.
Since we are not able to provide uniform estimates when $\varepsilon>0$ and $\beta=0$ (except in the case
when $f\equiv 0$), we expect to obtain the desired result under an additional constraint on the double limit,
of the kind $\beta\geq F(\varepsilon)$, for a suitable positive function $F$ vanishing at zero.

\subsection*{Acknowledgments} This work was initiated during F.\ Di Plinio's Summer 2015 visit to the Mathematics Department at Politecnico di Milano, whose hospitality is kindly acknowledged.
F.\ Di Plinio was partially
supported by the National Science Foundation under the grants
NSF-DMS-1500449 and  NSF-DMS-1650810.
R.\ Temam was supported by NSF DMS Grant 1510249 and by the Research Fund of Indiana University.



\end{document}